\documentclass[a4paper,11pt]{amsart}
\usepackage{amsmath}
\usepackage{amssymb}
\usepackage{mathrsfs}
\usepackage{enumerate}
\usepackage{ifthen}
\usepackage{graphicx}
\usepackage{subfig}
\usepackage{caption}
\usepackage[T1]{fontenc} 
\usepackage{tikz}
\usepackage{cite}

\nonstopmode \numberwithin{equation}{section}
\newtheorem{theorem}{Theorem}[section]

\newtheorem{conj}{Conjecture}[section]

\newtheorem{lemma}{Lemma}[section]
\newtheorem{problem}{Problem}[section]

\theoremstyle{remark}
\theoremstyle{definition}
\newtheorem{remark}{Remark}[section]
\newtheorem{definition}{Definition}[section]

\theoremstyle{plain}

\numberwithin{equation}{section}
\numberwithin{theorem}{section}

\newcounter{minutes}
\divide\time by 60
\newcounter{hours}
\multiply\time by 60
\addtocounter{minutes}{-\time}

\begin{document}

\title{Sharp Estimates of Logarithmic Coefficients for a Certain Class of Starlike Functions}

\author{Molla Basir Ahamed}
\address{Molla Basir Ahamed, Department of Mathematics, Jadavpur University, Kolkata-700032, West Bengal, India.}
\email{mbahamed.math@jadavpuruniversity.in}

\author{Sanju Mandal}
\address{Sanju Mandal, Department of Mathematics, Jadavpur University, Kolkata-700032, West Bengal, India.}
\email{sanjum.math.rs@jadavpuruniversity.in, sanju.math.rs@gmail.com}

\subjclass[2020]{Primary 30C45; Secondary 30C50, 30C80}
\keywords{Starlike functions; Hyperbolic cosine; Logarithmic coefficients; Hankel determinant; Inverse coefficients; Sharp bounds.}

\def\thefootnote{}
\footnotetext{ {\tiny File:~\jobname.tex,
printed: \number\year-\number\month-\number\day,
          \thehours.\ifnum\theminutes<10{0}\fi\theminutes }
} \makeatletter\def\thefootnote{\@arabic\c@footnote}\makeatother

\begin{abstract}
 In this article, we investigate the extremal properties of logarithmic coefficients for the class $\mathcal{S}_{ch}^*$ of starlike functions associated with the hyperbolic cosine function. We establish the sharp upper bounds for the initial logarithmic coefficients $\gamma_n$ for $n=1, 2, 3$, and determine the precise bound for the second Hankel determinant $H_{2,1}(F_f/2)$ within this class. Furthermore, we extend our analysis to the inverse functions, deriving sharp estimates for the logarithmic inverse coefficients and the corresponding second Hankel determinant $|H_{2,1}(F_{f^{-1}}/2)|$. Additionally, we provide sharp bounds for the moduli differences of both logarithmic and inverse logarithmic coefficients. The sharpness of all obtained inequalities is verified through the construction of specific extremal functions.
\end{abstract}

\thanks{}
\maketitle
\pagestyle{myheadings}
\markboth{M. B. Ahamed and S. Mandal}{Sharp Logarithmic Coefficient Estimates for Starlike Functions}

\section{\bf Introduction}
Let $\mathcal{H}$ denote the class of holomorphic functions $f$ in the open unit disk $\mathbb{D}=\{z\in\mathbb{C}: |z|<1\}$. Then $\mathcal{H}$ is a locally convex topological vector space endowed with the topology of uniform convergence over compact subsets of $\mathbb{D}$. Let $\mathcal{A}$ denote the class of functions $f\in\mathcal{H}$ such that $f(0)=0$ and $f^{\prime}(0)=1$ \textit{i.e.}, the function $f$ is of the form
\begin{align}\label{eq-1.1}
	f(z)=z+ \sum_{n=2}^{\infty}a_nz^n,\; \mbox{for}\; z\in\mathbb{D}.
\end{align} 
Let $\mathcal{S}$ denote the subclass of all functions in $\mathcal{A}$ which are univalent. For a comprehensive understanding of the theory of univalent functions and their significance in coefficient problems, we refer to the books \cite{Duren-1983-NY,Goodman-1983}.

\begin{definition}\label{def-1.1}
Let $f$ and $g$ be two analytic functions in the unit disk $\mathbb{D}$. Then $f$ is said to be subordinate to $g$, written as $f\prec g$ or $f(z)\prec g(z)$, if there exists a function $\omega$, analytic in $\mathbb{D}$ with $w(0)=0$, $|w(z)|<1$ such that $f(z)=g(w(z))$ for $z\in\mathbb{D}$. Moreover, if $g$ is univalent in $\mathbb{D}$ and $f(0)=g(0)$, then $f(\mathbb{D})\subseteq g(\mathbb{D})$.
\end{definition}

In $1994$, Ma and Minda \cite{Ma-Minda-1994} gave a unified presentation of various subclasses of starlike and convex functions by replacing the subordinate function $(1+z)/(1-z)$ by a more general analytic function $\varphi$ with positive real part and normalized by the conditions $\varphi(0) =1$, $\varphi^{\prime}(0)> 0$ and $\varphi$ maps $\mathbb{D}$ onto univalently a starlike region with respect to $1$ and symmetric with respect to the real axis. Ma and Minda \cite{Ma-Minda-1994} have introduced the following general class that envelopes several well-known classes as special cases
\begin{align*}
	\mathcal{S}^*[\varphi]=\bigg\{f\in\mathcal{A}: \frac{zf^{\prime}(z)}{f(z)}\prec\varphi(z)\bigg\}.
\end{align*} In the literature, functions belonging to the class $\mathcal{S}^*[\varphi]$ are known as the Ma-Minda starlike functions. For $-1\leq B<A\leq 1$, the class $\mathcal{S}^*[(1+Az)/(1+Bz)]:= \mathcal{S}^*[A,B]$
is called the class of Janowski starlike functions, introduced by Janowski \cite{Janowski-APM-1973}. The class $\mathcal{S}^*[\beta]$ of starlike functions of order $\beta $, where $ 0\leq\beta<0 $, is defined by taking $\varphi(z)= (1+(1-2\beta)z)/(1-z)$. Note that $\mathcal{S}^*=\mathcal{S}^*[0]$ is the classical class of starlike functions. By taking 
\begin{align*}
	\varphi(z)= 1+ \frac{2}{\pi^2}\left(\log\left((1+\sqrt{z})/(1-\sqrt{z})\right)\right)^2,
\end{align*} we obtain the class $\mathcal{S}^*[\varphi]=\mathcal{S}_p$ of parabolic starlike functions, introduced by Rønning \cite{Rønning-PAMS-1993}. The coefficient problem and many other geometric properties for functions in the class $\mathcal{S}^*[\varphi]$ have been studied extensively in (see \cite{Deniz-BMMSS-2021, Goodman-APM-1991,Kazimoglu-Deniz-Srivastava-COAT-2024,Lecko-Sim-RM-2019,Keogh-Merkes-PAMS-1969}). For example, Deniz \cite{Deniz-BMMSS-2021} has studied the sharp coefficient problem for the function
\begin{align*}
	\varphi(z)=e^{z+\frac{\lambda}{2}z^2} \;\;(z\in\mathbb{C}, \lambda\geq 1), 
\end{align*}
which is a generated function of generalized telephone numbers. In $2015$, Mendiratta \textit{et al.} \cite{Mendiratta-Nagpal-Ravichandran-BMMSS-2015} obtained the structural formula, inclusion relations, coefficient estimates, growth and distortion results, subordination theorems and various radii constants for the exponential function $\varphi(z)= e^z$. \vspace{1.2mm}

In $2019$, Cho \textit{et al.} \cite{Cho et.al.-BIMS-2019} studied the class $\mathcal{S}^{*}_{S}:=\mathcal{S}^{*}(1+\sin z)$ and the class $\mathcal{S}^{*}_{\cos}:=\mathcal{S}^{*}(\cos z)$ has been studied by \cite{Marımuthu-Uma-Bulboaca-HJM-2023} and \cite{Bano-Raza-BIMS-2021}. In the recent years, the class of starlike functions associated with cosine hyperbolic functions has been discussed by Riaz \textit{et al.} \cite{Riaz-Lecko-Raza-IJS-2023} which is defined as follows:
\begin{definition}
Let $f\in\mathcal{A}$. Then $f\in\mathcal{S}^*_{ch}$ if, and only if, 
\begin{align*}
	\frac{zf^{\prime} (z)}{f(z)} \prec \varphi_{0}(z) \;\;\;\;z\in\mathbb{D},
\end{align*}
where
\begin{align*}
	\mathbb{C}\ni z\mapsto\varphi_{0}(z):= z+\cosh (z)=z+ \frac{1}{2}\left(\exp(z) +\exp(-z)\right).
\end{align*}
\begin{figure}[!htb]
	\begin{center}
		\subfloat[Unit disk $z$-plane]{
			\includegraphics[width=0.45\linewidth]{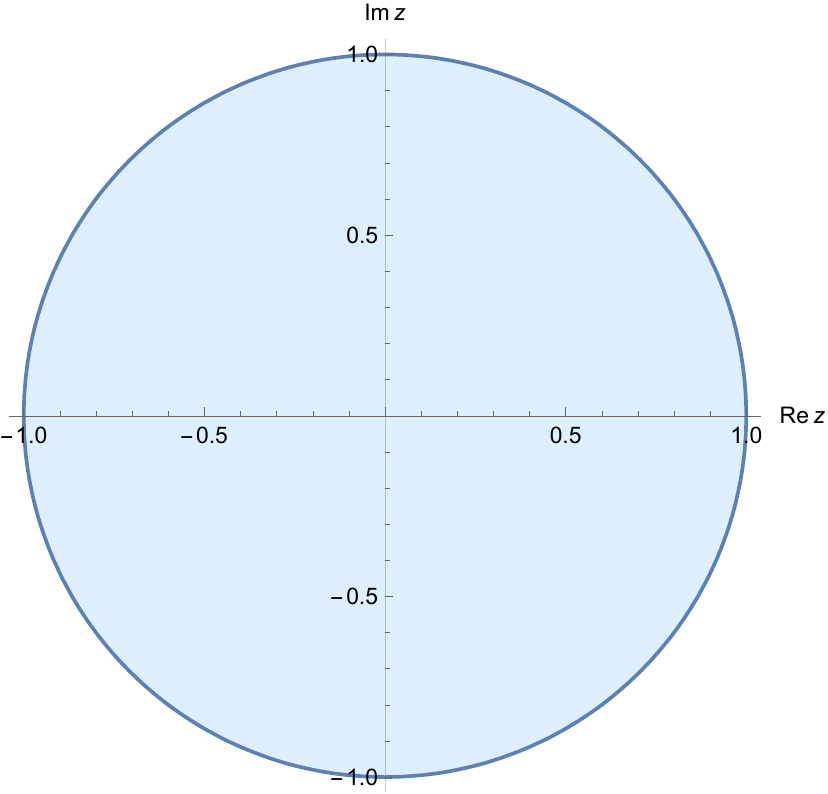}}
		\hspace{2mm}
		\subfloat[$w$-plane]{
			\includegraphics[width=0.50\linewidth]{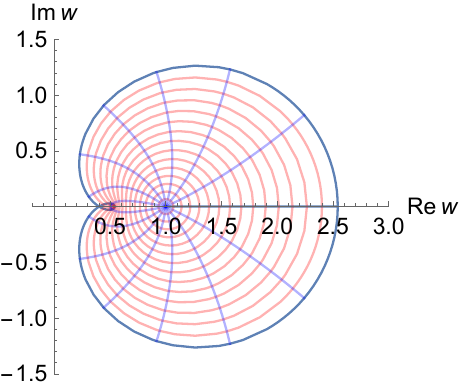}}
	\end{center}
	\caption{The geometrical representation of $z+\cosh (z)$.}
\end{figure}\vspace{2mm}

It is important to note that the function $\varphi$ need not be univalent in $\mathbb{D}$ (see \cite[Observation 1.2]{Riaz-Lecko-Raza-IJS-2023}). A function $f\in\mathcal{S}^*_{ch}$ if, and only if, there exists an analytic function $\varphi\in\mathcal{H}$, satisfying $\varphi\prec\varphi_{0}$ such that
\begin{align*}
	f(z)=z\exp\left(\int_{0}^{z}\frac{\varphi(t)-1}{t}dt\right), \;\;\;\;z\in\mathbb{D}.
\end{align*}
\end{definition}
\subsection{New problem formulations for the class $\mathcal{S}^*_{ch}$} Establishing the upper bounds for coefficients has remained a central theme in geometric function theory, as these estimates reveal fundamental structural properties of analytic functions. A primary challenge in this pursuit is the identification of suitable extremal functions that demonstrate the sharpness of such bounds. We observe that while coefficient problems involving the Hankel determinant for the class $\mathcal{S}^*_{ch}$ have been extensively explored, the corresponding determinants composed of logarithmic coefficients for this class have not yet garnered significant research attention. Furthermore, the sharp bounds for individual logarithmic coefficients for the class $\mathcal{S}^*_{ch}$ remain largely unaddressed. This gap in the literature serves as the primary motivation for the present study. \vspace{1.2mm}

The primary motivation of this study is to provide a comprehensive characterization of the logarithmic functionals for functions subordinated to the hyperbolic cosine function. To this end, we formulate and resolve the following key problems.
\begin{problem}\label{Prob-1}
	Let $f \in \mathcal{S}_{ch}^*$ be a starlike function associated with the hyperbolic cosine function. Determine the sharp upper bounds for the modulus of the logarithmic coefficients $\gamma_n$ for $n\in\mathbb{N}$.
\end{problem}
\begin{problem}\label{Prob-2}
	For $f\in \mathcal{S}_{ch}^*$, find the sharp bound for the second Hankel determinant $H_{2,1}(F_f/2) = \gamma_1 \gamma_3 - \gamma_2^2$ consisting of logarithmic coefficients.
\end{problem}
\begin{problem}\label{Prob-3}
	Let $f^{-1}$ be the inverse of a function $f$ in $\mathcal{S}_{ch}^*$. Determine the sharp bounds of the modulus of the inverse logarithmic coefficients $\Gamma_n$.
\end{problem}
\begin{problem}\label{Prob-4}
	Establish the sharp bound for the second Hankel determinant $|H_{2,1}(F_{f^{-1}}/2)|$ for the inverse functions belonging to the class $\mathcal{S}_{ch}^*$.
\end{problem}
\begin{problem}\label{Prob-5}
	Analyze the difference between the moduli of consecutive logarithmic coefficients $|\gamma_{n+1}| - |\gamma_n|$ and their inverse counterparts. Determine if a uniform sharp bound exists for these differences for functions in the class $\mathcal{S}_{ch}^*$.
\end{problem}
In this article, our aim is give affirmative answers to the Problems \ref{Prob-1} to \ref{Prob-5}. These problems include finding the sharp bound of the Hankel determinant of logarithmic coefficients, the sharpness of $\gamma_1, \gamma_2,$ and $\gamma_3$. In the subsequent sections, we will discuss our findings and provide a background study on these topics. The organization of this paper is as follows: In Section 2, we establish the sharp bounds for $\gamma_1, \gamma_2,$ and $\gamma_3$ and $|H_{2,1}(F_f/2)|$ for functions $f\in\mathcal{S}^*_{ch}$. In Section 3, we establish the sharp bound of  $\Gamma_1$, $\Gamma_2$ and $|H_{2,1}(F_{f^{-1}}/2)|$ for function in the class $\mathcal{S}^*_{ch}$. In Section 4, we establish results finding sharp bounds of the moduli differences of logarithmic coefficients and inverse counterpart for functions in the class $\mathcal{S}^*_{ch}$. The proofs of the results are discussed in detail in each respective section.

\section{\bf Sharp bound of logarithmic coefficients for functions in the class $\mathcal{S}^*_{ch}$:}
For $f\in\mathcal{S}$, we define the logarithmic coefficients $\gamma_{n}(f)$ by
\begin{align}\label{eq-2.1}
	F_{f}(z):=\log\dfrac{f(z)}{z}=2\sum_{n=1}^{\infty}\gamma_{n}(f)z^n, \;\; z\in\mathbb{D},\;\;\log 1:=0.
\end{align}
The logarithmic coefficients $\gamma_{n}$ for functions in the class $\mathcal{S}$ play a vital role in Milin’s conjecture (\cite{Milin-1977-ET}, see also \cite[p.155]{Duren-1983-NY}).
Milin conjectured that for $f\in\mathcal{S}$ and $n\geq2$,
\begin{align*}
	\sum_{m=1}^{n}\sum_{k=1}^{m}\left(k|\gamma_{k}|^2 -\frac{1}{k}\right)\leq 0,
\end{align*}
where the equality holds if, and only if, $f$ is a rotation of the Koebe function. De Branges \cite{Branges-AM-1985} has proved Milin conjecture which confirmed the famous Bieberbach conjecture. On the other hand, one of reasons for more attention has been given to the logarithmic coefficients is that the sharp bound for the class $\mathcal{S}$ is known only for $\gamma_{1}$ and $\gamma_{2}$, namely
\begin{align*}
	|\gamma_{1}|\leq 1, \;\; |\gamma_{2}|\leq \frac{1}{2}\left(1+2e^{-2}\right) =0.635\ldots
\end{align*}
It is still an open problem to find the sharp upper bounds for absolute value of $\gamma_{n}$, $n\geq 3$, for functions in the class $\mathcal{S}$. Estimating the modulus of logarithmic coefficients for functions $f\in\mathcal{S}$ and various sub-classes has been considered recently by several authors. For more information on logarithmic coefficients, we refer to \cite{Ali-Allu-PAMS-2018, Roth-PAMS-2007, Ali-Allu-Thomas-CRMCS-2018, Cho-Kowalczyk-kwon-Lecko-Sim-RACSAM-2020,Girela-AASF-2000,Thomas-PAMS-2016,Ahamed-Mandal-F-2026,Ahamed-Mandal-UMJ-2026,Mandal-Ahamed-LMJ-2024,Mandal-Ahamed-LMJ-2024,Mandal-Ahamed-Zaprawa-MS-2025} and references therein.\vspace{2mm}

The evaluation of Hankel determinants has been a major concern in geometric function theory, where these determinants are formed by employing coefficients of analytic functions $f$ that are represented by \eqref{eq-1.1} in the unit disk $\mathbb{D}$. Hankel matrices (and determinants) have emerged as fundamental elements in different areas of mathematics, finding a wide range of applications (see \cite{Ye-Lim-FCM-2016}). The primary objective of this study is to determine the sharp bound of logarithmic coefficients and the Hankel determinants involving the logarithmic coefficients. To begin, we present the definitions of Hankel determinants in situations where $f\in \mathcal{A}$.\vspace{1.2mm}

The Hankel determinant $H_{q,n}(f)$ of Taylor's coefficients of functions $f\in\mathcal{A}$ represented by \eqref{eq-1.1} is defined for $q,n\in\mathbb{N}$ as follows:
\begin{align*}
H_{q,n}(f):=\begin{vmatrix}
a_{n} & a_{n+1} &\cdots& a_{n+q-1}\\ a_{n+1} & a_{n+2} &\cdots& a_{n+q} \\ \vdots & \vdots & \vdots & \vdots \\ a_{n+q-1} & a_{n+q} &\cdots& a_{n+2(q-1)}
\end{vmatrix}.
\end{align*}
The extensive exploration of the sharp bounds of the Hankel determinants for starlike, convex, and other function classes have been undertaken in various studies (see \cite{Raza-Riza-Thomas-BAMS-2023, Kowalczyk-Lecko-BAMS-2022, Mandal-Ahamed-LMJ-2024}), and their precise bounds have been successfully established. \vspace{1.2mm}

Differentiating \eqref{eq-2.1} and then using \eqref{eq-1.1}, a simple computation shows that 
\begin{align}\label{eq-2.2}
	\begin{cases}
		\gamma_{1}=\dfrac{1}{2}a_{2},\vspace{2mm}\\ \gamma_{2}=\dfrac{1}{2} \left(a_{3} -\dfrac{1}{2}a^2_{2}\right), \vspace{2mm}\\ \gamma_{3} =\dfrac{1}{2}\left(a_{4}- a_{2}a_{3} +\dfrac{1}{3}a^3_{2}\right) \vspace{2mm}.
	\end{cases}
\end{align}
In $2022$, Kowalczyk and Lecko \cite{Kowalczyk-Lecko-BAMS-2022} proposed a Hankel determinant $H_{q,n}(F_f/2)$ whose elements are the logarithmic coefficients of $f\in\mathcal{S}$, realizing the extensive use of these coefficients. It follows that
\begin{align}\label{eq-2.3}
		H_{2,1}(F_f/2):=\gamma_1\gamma_3-\gamma_2^2=\frac{1}{48}\left(a_2^4-12a_3^2+12a_2a_4\right).
\end{align}
Furthermore, $H_{2,1}(F_{f}/2)$ is invariant under rotation, since for $f_{\theta}(z):=e^{-i\theta}f(e^{i\theta}z)$, $\theta\in\mathbb{R}$ when $f\in\mathcal{S}$, we have
\begin{align*}
	H_{2,1}(F_{f_{\theta}}/2)=\frac{e^{4i\theta}}{48}\left(a^4_2 - 12 a^2_3 + 12 a_2 a_4\right)=e^{4i\theta}H_{2,1}(F_{f}/2).
\end{align*}
Let $\mathcal{P}$ be the class of all analytic functions $p$ in the unit disk $\mathbb{D}$ satisfying $p(0)=1$ and $\mbox{Re}\;p(z)>0$ for $z\in\mathbb{D}$. Therefore, every $p\in\mathcal{P}$ can be represented as
\begin{align}\label{eq-2.4}
	p(z)=1+\sum_{n=1}^{\infty}c_n z^n,\; z\in\mathbb{D}.
\end{align}
Elements of the class $\mathcal{P}$ are called  Carath$\acute{e}$odory functions. It is known that $|c_n|\leq 2$, $n\geq 1$ for a functions $p\in\mathcal{P}$ (see \cite{Duren-1983-NY}). The Carath$\acute{e}$odory class $\mathcal{P}$ and it's coefficients bound play a significant role in establishing the bound of Hankel determinants.\vspace{2mm}

Now, we state some lemmas, which will be useful to establish our main results. Parametric representations of the coefficients are often useful in finding the bound for Hankel determinants, and in this regard, Libera and Zlotkiewicz (see \cite{Libera-Zlotkiewicz-PAMS-1982, Libera-Zlotkiewicz-PAMS-1983}) obtained the parameterizations of possible values of $c_2$ and $c_3$, which are Taylor coefficients for functions with positive real part.

\begin{lemma}\cite{Libera-Zlotkiewicz-PAMS-1982,Libera-Zlotkiewicz-PAMS-1983}\label{lem-2.1}
If $p\in\mathcal{P}$ is of the form \eqref{eq-2.4} with $c_1\geq 0$, then 
\begin{align}\label{eq-2.5}
	&c_1=2\tau_1,\\\label{eq-2.6} &c_2=2\tau^2_1 +2(1-\tau^2_1)\tau_2
\end{align}
and
\begin{align}\label{eq-2.7}
	c_3= 2\tau^3_1  + 4(1 -\tau^2_1)\tau_1\tau_2 - 2(1 - \tau^2_1)\tau_1\tau^2_2 + 2(1 - \tau^2_1)(1 - |\tau_2|^2)\tau_3
\end{align}
for some $\tau_1\in[0,1]$ and $\tau_2,\tau_3\in\overline{\mathbb{D}}:= \{z\in\mathbb{C}:|z|\leq 1\}$.\vspace{1.2mm}
	
For $\tau_1\in\mathbb{T}:=\{z\in\mathbb{C}:|z|=1\}$, there is a unique function $p\in\mathcal{P}$ with $c_1$ as in \eqref{eq-2.5}, namely
\begin{align*}
	p(z)=\frac{1+\tau_1 z}{1-\tau_1 z}, \;\;z\in\mathbb{D}.
\end{align*}
	
For $\tau_1\in\mathbb{D}$ and $\tau_2\in\mathbb{T}$, there is a unique function $p\in\mathcal{P}$ with $c_1$ and $c_2$ as in \eqref{eq-2.5} and \eqref{eq-2.6}, namely
\begin{align*}
	p(z)=\frac{1+(\overline{\tau_1}\tau_2 +\tau_1)z+\tau_2 z^2}{1 +(\overline{\tau_1}\tau_2 -\tau_1)z-\tau_2 z^2}, \;\;z\in\mathbb{D}.
\end{align*}
	
For $\tau_1,\tau_2\in\mathbb{D}$ and $\tau_3\in\mathbb{T}$, there is a unique function $p\in\mathcal{P}$ with $c_1,c_2$ and $c_3$ as in \eqref{eq-2.5}--\eqref{eq-2.7}, namely
\begin{align*}
	p(z)=\frac{1+(\overline{\tau_2}\tau_3+\overline{\tau_1}\tau_2 +\tau_1)z +(\overline{\tau_1}\tau_3+ \tau_1\overline{\tau_2}\tau_3 +\tau_2)z^2 +\tau_3 z^3}{1 +(\overline{\tau_2}\tau_3+ \overline{\tau_1}\tau_2 -\tau_1)z +(\overline{\tau_1}\tau_3- \tau_1\overline{\tau_2}\tau_3 -\tau_2)z^2 -\tau_3 z^3}, \;\;z\in\mathbb{D}.
\end{align*}
\end{lemma}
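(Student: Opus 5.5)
The plan is to derive the parametrization from the Schur algorithm, i.e.\ the classical correspondence between $\mathcal{P}$ and the unit ball of $H^\infty$. Write $p=(1+\omega)/(1-\omega)$, where $\omega$ is analytic in $\mathbb{D}$ with $\omega(0)=0$ and $|\omega|\le 1$. Then set $\omega(z)=z\,g_1(z)$ with $g_1$ analytic and $|g_1|\le 1$ by Schwarz's lemma. Apply the Schur iteration
\begin{align*}
g_{k}(z)=\frac{g_{k-1}(z)-\tau_{k-1}}{z\,(1-\overline{\tau_{k-1}}g_{k-1}(z))},\qquad \tau_k:=g_k(0),
\end{align*}
which is valid as long as $|\tau_{k-1}|<1$. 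Each $g_k$ is again a self-map of $\overline{\mathbb{D}}$ by Schwarz--Pick, so $\tau_1,\tau_2,\tau_3\in\overline{\mathbb{D}}$. If some $|\tau_k|=1$, the maximum principle forces $g_k\equiv\tau_k$; in that case the later parameters may be chosen arbitrarily, because their coefficients vanish.

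Next I invert the iteration: $g_{k-1}=(\tau_{k-1}+zg_k)/(1+\overline{\tau_{k-1}}zg_k)$. Expanding to low order gives $g_1(z)=\tau_1+(1-|\tau_1|^2)\tau_2 z+\big[(1-|\tau_1|^2)(1-|\tau_2|^2)\tau_3-(1-|\tau_1|^2)\overline{\tau_1}\tau_2^2\big]z^2+\cdots$, which determines $\omega=zg_1$ up to order $z^3$. Substituting into $p=1+2\omega+2\omega^2+2\omega^3+\cdots$ yields $c_1=2\tau_1$, $c_2=2(1-|\tau_1|^2)\tau_2+2\tau_1^2$, and a cubic expression for $c_3$. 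Under the hypothesis $c_1\ge0$ we have $\tau_1=c_1/2\in[0,1]$, so $\overline{\tau_1}=\tau_1$, and collecting terms reproduces \eqref{eq-2.6} and \eqref{eq-2.7}. The only real bookkeeping is the $z^3$ coefficient. It combines $2\omega_3$, $4\omega_1\omega_2$ and $2\omega_1^3$, where $\omega_1=\tau_1$, $\omega_2=(1-\tau_1^2)\tau_2$, and $\omega_3=(1-\tau_1^2)\big[(1-|\tau_2|^2)\tau_3-\tau_1\tau_2^2\big]$. This gives exactly $2\tau_1^3+4(1-\tau_1^2)\tau_1\tau_2-2(1-\tau_1^2)\tau_1\tau_2^2+2(1-\tau_1^2)(1-|\tau_2|^2)\tau_3$.

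For the uniqueness statements I use the same machinery. If $|\tau_1|=1$, then $g_1\equiv\tau_1$, so $\omega=\tau_1z$ and $p=(1+\tau_1z)/(1-\tau_1z)$. If $\tau_1\in\mathbb{D}$ and $\tau_2\in\mathbb{T}$, then $g_2\equiv\tau_2$ and $g_1=(\tau_1+\tau_2z)/(1+\overline{\tau_1}\tau_2z)$, which is forced. Then $p=(1+zg_1)/(1-zg_1)$, and clearing denominators gives the displayed quotient with numerator $1+(\overline{\tau_1}\tau_2+\tau_1)z+\tau_2z^2$. The case $\tau_3\in\mathbb{T}$ goes the same way: $g_3\equiv\tau_3$, and two Möbius back-substitutions give a rational $g_1$ of degree $2$, hence the stated degree-$3$ quotient.

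The main obstacle is making the correspondence exact, i.e.\ showing that every triple of parameters in the allowed ranges is realized by some $p$ and that boundary values force the function uniquely. Realizability is handled by choosing $g_3$ arbitrary with $g_3(0)=\tau_3$ (for example the constant $\tau_3$) and running the inverse iteration. Uniqueness rests on the equality case of Schwarz--Pick at each stage. The remaining difficulty is purely algebraic: verifying the final rational expression in the $\tau_3\in\mathbb{T}$ case, which I would check by direct expansion.
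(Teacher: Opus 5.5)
The paper does not prove this lemma; it quotes it from Libera and Zlotkiewicz. Their argument obtains the representations of $c_2$ and $c_3$ from the Carath\'eodory--Toeplitz theorem, by solving the inequalities that express nonnegativity of the Hermitian Toeplitz determinants built from $2,c_1,c_2,c_3$.

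Your route through the Schur algorithm for $\omega(z)/z$ is genuinely different, and it is correct. I checked your values of $\omega_1,\omega_2,\omega_3$, the resulting formula for $c_3$, and both back-substitutions. They reproduce the displayed degree-$2$ and degree-$3$ quotients exactly.

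What your approach buys is the identification of the lemma's $\tau_k$ as the Schur parameters of $\omega(z)/z$. The parameter ranges, the realizability of every admissible tuple, and the degenerate cases $|\tau_k|=1$ then follow from Schwarz's lemma and the maximum modulus principle, with no determinant algebra. The same scheme also produces $c_4,c_5,\dots$ mechanically. The Toeplitz approach works directly on the coefficient body and needs no iteration, but its algebra grows quickly with the order. In that framework the uniqueness clauses must be read off from the degenerate case of vanishing determinants, where $p$ is a convex combination of finitely many functions $(1+\varepsilon z)/(1-\varepsilon z)$ with $|\varepsilon|=1$.

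Two details should be written out.

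First, the uniqueness clauses allow complex $\tau_1$. They therefore rely on your general formulas containing $|\tau_1|^2$ and $\overline{\tau_1}$, not on the printed formulas for $c_2,c_3$, which take $\tau_1$ real.

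Second, uniqueness requires that any competitor $p$ with the prescribed $c_1,\dots,c_k$ has the given $\tau_1,\dots,\tau_k$ as its own Schur parameters. This holds because, once $\tau_1,\dots,\tau_{k-1}\in\mathbb{D}$, the coefficient $c_k$ is affine in $\tau_k$ with nonzero factor $2\prod_{j<k}(1-|\tau_j|^2)$. Only after establishing this does $|\tau_k|=1$ force $g_k\equiv\tau_k$ for that competitor. You describe this as the equality case of Schwarz--Pick, but the operative facts are this triangular invertibility together with the maximum modulus principle.
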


\begin{lemma}\cite{Cho-Kim-Sugawa-JMSJ-2007}\label{lem-2.2}
Let $A, B, C$ be real numbers and
\begin{align*}
	Y(A,B,C):=\max\{|A+ Bz +Cz^2| +1-|z|^2: z\in\overline{\mathbb{D}}\}.
\end{align*}
\noindent{(i)} If $AC\geq 0$, then
\begin{align*}
	Y(A,B,C)=\begin{cases}
		|A|+|B|+|C|, \;\;\;\;\;\;\;\;\;\;\;\;\;|B|\geq 2(1-|C|), \vspace{2mm}\\ 1+|A|+\dfrac{B^2}{4(1-|C|)}, \;\;\;\;\;|B|< 2(1-|C|).
	\end{cases}
\end{align*}
\noindent{(ii)} If $AC<0$, then
\begin{align*}
	Y(A,B,C)=\begin{cases}
		1-|A|+\dfrac{B^2}{4(1-|C|)}, \;\;\;\;-4AC(C^{-2}-1)\leq B^2\land|B|< 2(1-|C|),\vspace{2mm} \\ 1+|A|+\dfrac{B^2}{4(1+|C|)}, \;\;\;\; B^2<\min\{4(1+|C|)^2,-4AC(C^{-2}-1)\}, \vspace{2mm} \\ R(A,B,C), \;\;\;\;\;\;\;\;\;\;\;\;\;\;\;\;\;\;\; otherwise,
	\end{cases}
\end{align*}
where
\begin{align*}
	R(A,B,C):= \begin{cases}
		|A|+|B|-|C|, \;\;\;\;\;\;\;\;\;\;\;\;\; |C|(|B|+4|A|)\leq |AB|, \vspace{2mm}\\ -|A|+|B|+|C|, \;\;\;\;\;\;\;\;\;\;\; |AB|\leq |C|(|B|-4|A|), \vspace{2mm}\\ (|C| +|A|)\sqrt{1-\dfrac{B^2}{4AC}}, \;\;\; otherwise.
	\end{cases}
\end{align*}
\end{lemma}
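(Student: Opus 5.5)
The plan is to reduce the two-variable maximization over $\overline{\mathbb{D}}$ to a one-variable problem on the radius, by first maximizing over the argument on each circle $|z|=r$.

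\textbf{Reductions.} Replacing $z$ by $-z$ changes the sign of $B$, and replacing $(A,B,C)$ by $(-A,-B,-C)$ does not change $|A+Bz+Cz^2|$. So we may assume $B\geq 0$ and $A\geq 0$; the sign of $AC$ is unaffected. Write $z=re^{it}$ with $r\in[0,1]$ and set $x=\cos t\in[-1,1]$. Using $\cos 2t=2x^2-1$, a direct expansion gives
\begin{align*}
|A+Bz+Cz^2|^2 = A^2+B^2r^2+C^2r^4-2ACr^2 + 2Br(A+Cr^2)\,x + 4ACr^2x^2 =: g_r(x).
\end{align*}
So for fixed $r$ we must maximize a real quadratic $g_r$ in $x$ over $[-1,1]$. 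Afterwards we maximize $\sqrt{\max_x g_r(x)}+1-r^2$ over $r\in[0,1]$.

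\textbf{Case $AC\geq 0$.} Here $g_r$ is convex in $x$, and with our sign normalization $A$, $C$ and $B$ have aligned signs. Hence the maximum is at $x=1$ and equals $(|A|+|B|r+|C|r^2)^2$. It then remains to maximize
\begin{align*}
h(r)=|A|+1+|B|r-(1-|C|)r^2, \qquad r\in[0,1].
\end{align*}
If $|C|<1$ this is a concave parabola with vertex at $r_0=|B|/(2(1-|C|))$. When $r_0<1$ the maximum is $1+|A|+B^2/(4(1-|C|))$. Otherwise, and also whenever $|C|\geq1$, the maximum is at $r=1$, giving $|A|+|B|+|C|$. This is exactly (i).

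\textbf{Case $AC<0$.} Now $g_r$ is concave in $x$. Its critical point is
\begin{align*}
x_0(r)=\frac{-B(A+Cr^2)}{4ACr}.
\end{align*}
\begin{itemize}
\item[(a)] If $x_0(r)\in[-1,1]$, then $\max_x g_r$ equals $g_r(x_0)$. Here I expect the identity $\sqrt{g_r(x_0)}=(|A|+|C|r^2)\sqrt{1-B^2/(4ACr^2)}$ up to rearrangement. At $r=1$ this produces the term $(|C|+|A|)\sqrt{1-B^2/(4AC)}$.
\item[(b)] Otherwise the maximum sits at an endpoint $x=\pm1$. This gives $\big|\,|A|\pm|B|r-|C|r^2\big|$, that is, values $|A|+|B|r-|C|r^2$ or $-|A|+|B|r+|C|r^2$.
\end{itemize}
So $\max_x\sqrt{g_r}$ is a piecewise function of $r$, each piece being an explicit quadratic or a square-root expression.

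Adding $1-r^2$ and maximizing in $r$ produces the candidates:
\begin{itemize}
\item interior vertices of the pieces $-|A|+1+|B|r-(1-|C|)r^2$ and $|A|+1+|B|r-(1+|C|)r^2$, giving $1-|A|+B^2/(4(1-|C|))$ and $1+|A|+B^2/(4(1+|C|))$;
\item the boundary value $r=1$, which gives $R(A,B,C)$.
\end{itemize}
The three sub-cases of $R$ correspond to whether $x_0(1)$ lies left of, right of, or inside $[-1,1]$. For instance, $x_0(1)\geq1$ reads $|C|(|B|+4|A|)\leq|AB|$ after clearing signs.

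\textbf{Main obstacle.} The hard part is this last bookkeeping step. One must decide which interior vertex is admissible, meaning it lies in $[0,1]$ and on the correct piece where the endpoint, not the critical point, is maximal. One must also check that it dominates the value at $r=1$. The conditions $-4AC(C^{-2}-1)\leq B^2$ and $B^2<4(1+|C|)^2$ should arise precisely as these admissibility and domination inequalities. I would verify them by comparing the candidate values directly, using monotonicity of the square-root piece in $r$, and treating the transition radius where $|x_0(r)|=1$ explicitly.
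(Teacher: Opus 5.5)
The paper gives no proof of this lemma; it is quoted from the cited 2007 J.\ Math.\ Soc.\ Japan paper, so your sketch has to stand on its own. Your reduction to the quadratic $g_r(x)$ is correct, as is your treatment of case (i). So is your identification of the three sub-cases of $R(A,B,C)$ with the position of $x_0(1)$. One small slip: if $A=0$, the normalization $A\ge 0$ does not force $C\ge 0$, so in that case normalize $C\ge0$ instead.

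The genuine gap is case (ii). You stop exactly where the content of the lemma lies, and the one formula you record for that step is wrong. Completing the square gives
\[
g_r(x_0)=(A-Cr^2)^2\Bigl(1-\frac{B^2}{4AC}\Bigr),
\]
with no $r^2$ under the root. So wherever $|x_0(r)|\le 1$, the function to maximize is
\[
Q(r)=\kappa|A|+1+(\kappa|C|-1)r^2,\qquad \kappa=\sqrt{1-\frac{B^2}{4AC}}\ge 1 .
\]
It is this form that makes the middle piece monotone in $r$. Your expression with $4ACr^2$ has no such structure (it even blows up as $r\to0^+$), so the comparisons you intend to base on it cannot be carried out as stated.

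Once the formula is fixed, the missing argument is short. Take $A>0>C$ and $B>0$ (the case $B=0$ is trivial). Then $x_0(r)=\frac{|B|}{4|C|r}-\frac{|B|r}{4|A|}$ is strictly decreasing, so $[0,1]$ splits into three pieces:
\begin{itemize}
\item on $[0,r_1]$, $x=1$ is optimal and the objective is $P_1(r)=1+|A|+|B|r-(1+|C|)r^2$;
\item on $[r_1,r_2]$, the objective is $Q$;
\item on $[r_2,1]$, $x=-1$ is optimal and the objective is $P_2(r)=1-|A|+|B|r-(1-|C|)r^2$.
\end{itemize}
You may drop the absolute values at the endpoints because the two endpoint values sum to $2|B|r\ge0$.

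Since $P_1\le Q$ and $P_2\le Q$ for every $r>0$, with equality at $r_1$ and $r_2$ respectively, the pieces are tangent at the junctions. Hence the objective is $C^1$, and the junctions produce no new candidates.

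Now substitute the vertices $\rho_1=|B|/(2(1+|C|))$ and $\rho_2=|B|/(2(1-|C|))$. The conditions $x_0(\rho_1)\ge1$ and $x_0(\rho_2)\le-1$ become exactly $B^2\le-4AC(C^{-2}-1)$ and $B^2\ge-4AC(C^{-2}-1)$. The conditions $\rho_1<1$ and $\rho_2<1$ read $B^2<4(1+|C|)^2$ and $|B|<2(1-|C|)$. Together these are precisely the first two conditions of (ii).

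The ``domination'' you expect to verify by direct comparison is then automatic.
\begin{itemize}
\item If $\rho_2\in[r_2,1)$, then $P_2'(r_2)\ge0$, hence $Q'(r_2)\ge0$ by tangency. Since the sign of $Q'$ is that of $\kappa|C|-1$, we get $Q'\ge0$ throughout, so $P_1$ is nondecreasing on $[0,r_1]$. The objective is therefore unimodal with peak at $\rho_2$.
\item The case of $\rho_1$ is symmetric; the strict inequality in the second condition forces $|C|<1$, so $P_2$ is concave.
\item If neither vertex is admissible, the same propagation of the sign of the derivative through the junctions shows the objective is nondecreasing on $[0,1]$. The maximum is then its value at $r=1$, which is $R(A,B,C)$ by your analysis of $x_0(1)$.
\end{itemize}
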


\begin{lemma}\cite{Ma-Minda-1994}\label{lem-2.3}
Let $p\in\mathcal{P}$ be given by \eqref{eq-2.1}. Then
\begin{align*}
	|c_2 -vc^2_1|\leq\begin{cases}
		-4v +2 \;\;\;\; v<0,\\ 2 \;\;\;\;\;\;\;\;\;\;\;\;\;\; 0\leq v\leq 1, \\ 4v -2 \;\;\;\;\;\;\; v>1.
	\end{cases}
\end{align*}
For $v<0$ or $v>1$, the equality holds if, and only if,
\begin{align*}
	h(z)=\frac{1+z}{1-z}
\end{align*}
or one of its rotations. If $0 < v < 1$, then the equality is true if, and only if,
\begin{align*}
	h(z)=\frac{1+z^2}{1-z^2}
\end{align*}
or one of its rotations.
\end{lemma}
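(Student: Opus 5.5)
The plan is to reduce the functional to a parametric expression via Lemma~\ref{lem-2.1} and then maximize over the parameters directly. (I read $p$ as given by \eqref{eq-2.4} and write $h=p$ in the equality statements.) First I normalize: for $\theta\in\mathbb{R}$ the rotation $p_\theta(z)=p(e^{i\theta}z)$ lies in $\mathcal{P}$ and has coefficients $c_ne^{in\theta}$. Hence
\begin{align*}
c_2(p_\theta)-vc_1(p_\theta)^2=e^{2i\theta}\left(c_2-vc_1^2\right),
\end{align*}
so the modulus is rotation invariant and we may assume $c_1\geq 0$. Lemma~\ref{lem-2.1} then gives $c_1=2\tau_1$ with $\tau_1\in[0,1]$ and $c_2=2\tau_1^2+2(1-\tau_1^2)\tau_2$ with $\tau_2\in\overline{\mathbb{D}}$. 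Therefore
\begin{align*}
c_2-vc_1^2=2(1-2v)\tau_1^2+2(1-\tau_1^2)\tau_2 .
\end{align*}

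Next I apply the triangle inequality with $|\tau_2|\leq 1$:
\begin{align*}
|c_2-vc_1^2|\leq 2|1-2v|\tau_1^2+2(1-\tau_1^2)=2+2\tau_1^2\left(|1-2v|-1\right).
\end{align*}
This is linear in $\tau_1^2\in[0,1]$, so I split into three cases.
\begin{itemize}
\item[(a)] If $0\leq v\leq 1$, then $|1-2v|\leq 1$, so the right side is at most $2$.
\item[(b)] If $v<0$, then $|1-2v|-1=-2v>0$, and the maximum is attained at $\tau_1=1$, giving $2-4v$.
\item[(c)] If $v>1$, then $|1-2v|-1=2v-2>0$, and $\tau_1=1$ gives $4v-2$.
\end{itemize}
This yields the three stated bounds.

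For the equality cases I trace when each inequality is tight.
\begin{itemize}
\item[(b),(c)] When $v<0$ or $v>1$, the coefficient $|1-2v|-1$ is strictly positive. Equality therefore forces $\tau_1=1\in\mathbb{T}$, and the uniqueness part of Lemma~\ref{lem-2.1} gives $p(z)=(1+z)/(1-z)$. Undoing the normalization gives its rotations. Conversely, this $p$ has $c_1=c_2=2$, so $|c_2-vc_1^2|=|2-4v|$, which equals the bound.
\item[(a)] When $0<v<1$, we have $|1-2v|<1$, so equality forces $\tau_1=0$ and $|\tau_2|=1$. With $\tau_1=0$ and $\tau_2\in\mathbb{T}$, the second uniqueness statement of Lemma~\ref{lem-2.1} gives
\begin{align*}
p(z)=\frac{1+\tau_2z^2}{1-\tau_2z^2},
\end{align*}
which is a rotation of $(1+z^2)/(1-z^2)$. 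This function has $c_1=0$ and $|c_2|=2$, so equality holds.
\end{itemize}

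The only delicate point is the equality analysis, specifically justifying that tightness in the triangle inequality pins down $\tau_1$ and $|\tau_2|$. In the boundary cases $v=0,1$ extra extremals exist, which is why the statement restricts to strict inequalities there. Once this is checked, the uniqueness clauses of Lemma~\ref{lem-2.1} identify the extremal functions immediately.
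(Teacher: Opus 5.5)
Your proof is correct and complete, including the equality cases and your remark about the extra extremals at $v=0,1$. Note that the paper gives no proof of this lemma and simply cites Ma--Minda, so the only comparison available is with the standard argument.

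That argument writes $p=(1+w)/(1-w)$ with a Schwarz function $w(z)=w_1z+w_2z^2+\cdots$. This gives $c_2-vc_1^2=2\bigl(w_2+(1-2v)w_1^2\bigr)$, and one then applies the Schwarz--Pick inequality $|w_2|\leq 1-|w_1|^2$. After your normalization $c_1\geq 0$ one has $w_1=\tau_1$ and $w_2=(1-\tau_1^2)\tau_2$. So that inequality is exactly your step $|\tau_2|\leq 1$ followed by the triangle inequality, and the two routes are the same in substance.

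Your packaging has two advantages. It stays within tools the paper already states. It also makes the equality analysis immediate through the uniqueness clauses of Lemma~\ref{lem-2.1}, which themselves encode the equality cases of Schwarz's lemma. The classical version is more elementary, needing only Schwarz--Pick.

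Your intermediate estimate, $|c_2-vc_1^2|\leq 2-\tfrac12\bigl(1-|1-2v|\bigr)|c_1|^2$, is rotation invariant and proves more than the lemma states. It yields the known sharper forms $|c_2-vc_1^2|+v|c_1|^2\leq 2$ for $0<v\leq\tfrac12$ and $|c_2-vc_1^2|+(1-v)|c_1|^2\leq 2$ for $\tfrac12<v<1$.
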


\begin{lemma}\cite{Ali-BMMSS-2001}\label{lem-2.4}
Let $p\in\mathcal{P}$ be given by \eqref{eq-2.1} with $0\leq B\leq 1$ and $B(2B-1)\leq D\leq B$. Then 
\begin{align*}
	|c_3 -2Bc_1 c_2 +Dc^3_1|\leq 2.
\end{align*}
\end{lemma}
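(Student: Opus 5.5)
Since the functional in Lemma \ref{lem-2.4} changes only by a unimodular factor under the rotation $p(z)\mapsto p(e^{i\theta}z)$, we may assume $c_1\ge 0$. We then apply the parametrization of Lemma \ref{lem-2.1} with $\tau_1=t\in[0,1]$, $\tau_2=x\in\overline{\mathbb{D}}$ and $\tau_3=\zeta\in\overline{\mathbb{D}}$. Substituting \eqref{eq-2.5}--\eqref{eq-2.7} and collecting terms, the computation should give
\begin{align*}
c_3-2Bc_1c_2+Dc_1^3 = 2t^3(1-4B+4D) + 4(1-t^2)t(1-2B)x - 2(1-t^2)tx^2 + 2(1-t^2)(1-|x|^2)\zeta .
\end{align*}
Using $|\zeta|\le 1$, the whole problem reduces to showing
\begin{align*}
\Psi(t,x):=t^3|1-4B+4D| + (1-t^2)\Big(t\,|2(1-2B)x - x^2| + 1-|x|^2\Big)\le 1
\end{align*}
for all $t\in[0,1]$ and $x\in\overline{\mathbb{D}}$. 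I will keep the constant term outside the modulus, which is harmless since only an upper bound is needed.

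The first step is the endpoint $t=1$. Here we need $|1-4B+4D|\le 1$, which is equivalent to $B-\tfrac12\le D\le B$. The upper inequality is assumed. The lower one follows from the hypothesis $D\ge B(2B-1)$, because
\begin{align*}
B(2B-1)-\big(B-\tfrac12\big)=2\big(B-\tfrac12\big)^2\ge 0 .
\end{align*}
So the constraint region on $(B,D)$ is exactly what makes the $t^3$-coefficient harmless. Hence $\Psi(t,x)\le t^3+(1-t^2)\Phi(t)$, where
\begin{align*}
\Phi(t):=\max_{x\in\overline{\mathbb{D}}}\Big(t\,|2(1-2B)x-x^2|+1-|x|^2\Big).
\end{align*}

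The second step is to estimate $\Phi(t)$. I would apply Lemma \ref{lem-2.2} with $A=0$, middle coefficient $2t(1-2B)$ and quadratic coefficient $-t$. The product of the constant and quadratic coefficients is then $0$, so case (i) applies. Writing $b=|1-2B|\le 1$, it gives
\begin{align*}
\Phi(t)=\begin{cases} 2tb+t, & tb\ge 1-t,\\[1mm] 1+\dfrac{t^2b^2}{1-t}, & tb<1-t.\end{cases}
\end{align*}
It then remains to check $t^3+(1-t^2)\Phi(t)\le 1$ on $[0,1]$ in each regime. In the second regime this reduces to $t^3+(1-t^2)+t^2b^2(1+t)\le 1$, that is $t+b^2(1+t)\le 1$. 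This does \emph{not} hold near $t=1$, so discarding the phase of the constant term is too lossy when $B$ is near $0$ or $1$.

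This is the step I expect to be the main obstacle. If the crude estimate fails, I would keep $A=t^3(1-4B+4D)/(1-t^2)$ inside the modulus and use the full case analysis of Lemma \ref{lem-2.2}. With $C=-t$, we have $AC\le 0$ exactly when $1-4B+4D\ge 0$, so case (ii) becomes relevant. I would first test the extreme choices $D=B$ and $D=B(2B-1)$, since the expression is affine in $D$. In those cases I would verify that $Y\le 1/(1-t^2)$, using the subcase $R(A,B,C)$ of Lemma \ref{lem-2.2} where the maximum sits on the unit circle.

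Sharpness is easy to confirm. The function $p(z)=(1+z^3)/(1-z^3)$ gives $c_1=c_2=0$ and $c_3=2$, so equality holds for every admissible $(B,D)$.
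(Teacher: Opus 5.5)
The paper gives no proof of this lemma (it is quoted from Ali), so your argument has to stand on its own, and as written it does not. The preliminary steps are correct. The functional picks up the factor $e^{3i\theta}$ under rotation, your parametrized formula checks out, the endpoint $t=1$ is handled correctly, and $(1+z^3)/(1-z^3)$ gives equality.

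The gap is that you stop exactly where the content of the lemma lies. Splitting off $t^3(1-4B+4D)$ by the triangle inequality is indeed hopeless. Already for $B=D=0$, i.e.\ the classical $|c_3|\le 2$, your bound in the regime $tb\ge 1-t$ is $3t-2t^3$, which reaches $\sqrt2$ at $t=1/\sqrt2$. Note that the failure near $t=1$ occurs in this first regime, not in the second one as you say.

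Your fallback is the right target: prove $(1-t^2)\,Y\left(t^3(1-4B+4D)/(1-t^2),\,2t(1-2B),\,-t\right)\le 1$ for $0<t<1$. No further loss occurs, since $|\zeta|\le1$ is attained. Restricting to $D\in\{B,\,B(2B-1)\}$ is also legitimate, because the modulus of an affine function of $D$ is convex in $D$. However, none of this is verified, and the indicated route is wrong in detail:
\begin{itemize}
\item At $D=B(2B-1)$ the constant $1-4B+4D=8B^2-8B+1$ is negative for $(2-\sqrt2)/4<B<(2+\sqrt2)/4$. There $AC>0$, so case (i) of Lemma~\ref{lem-2.2} governs, not $R(A,B,C)$.
\item Even at $D=B$ the active subcase of (ii) changes with $t$. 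For $B=D=0$ it is the first subcase when $t<\tfrac12$, and $R$ applies only for $t>\tfrac12$.
\end{itemize}
So the inequality itself remains unproved.

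The cleanest way to close the gap bypasses Lemma~\ref{lem-2.2}. Write $\omega=(p-1)/(p+1)=w_1z+w_2z^2+w_3z^3+\cdots$. Then $c_1=2w_1$, $c_2=2(w_2+w_1^2)$ and $c_3=2(w_3+2w_1w_2+w_1^3)$, hence
\begin{align*}
c_3-2Bc_1c_2+Dc_1^3=2\left(w_3+\mu w_1w_2+\nu w_1^3\right),\qquad \mu=2-4B,\quad \nu=1-4B+4D,
\end{align*}
and the hypotheses become exactly $|\mu|\le 2$ and $\tfrac12\mu^2-1\le\nu\le 1$. The Prokhorov--Szynal lemma gives $|w_3+\mu w_1w_2+\nu w_1^3|\le 1$ in two ranges:
\begin{itemize}
\item $|\mu|\le\tfrac12$ and $-1\le\nu\le 1$;
\item $\tfrac12\le|\mu|\le 2$ and $\tfrac{4}{27}(|\mu|+1)^3-(|\mu|+1)\le\nu\le 1$.
\end{itemize}
The first range is covered since $\tfrac12\mu^2-1\ge -1$. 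For the second, put $s=|\mu|+1\in[\tfrac32,3]$; then
\begin{align*}
\tfrac12\mu^2-1-\left(\tfrac{4}{27}s^3-s\right)=\tfrac{1}{54}(3-s)(8s^2-3s-9)\ge 0 .
\end{align*}

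If you prefer to stay in your framework, the edge $D=B$ is immediate. Indeed $c_3-2Bc_1c_2+Bc_1^3=(1-B)c_3+B(c_3-2c_1c_2+c_1^3)$, and $-(c_3-2c_1c_2+c_1^3)$ is the third coefficient of $1/p\in\mathcal{P}$, so both terms have modulus at most $2$. Only the edge $D=B(2B-1)$ then needs the full case analysis of Lemma~\ref{lem-2.2}.
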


The significance of logarithmic coefficients in geometric function theory has led to a growing interest in finding sharp bound of logarithmic coefficients and the Hankel determinants with these coefficients. We obtain the following sharp bound of initial logarithmic coefficients $\gamma_{1}$, $\gamma_{2}$, $\gamma_{3}$ and $H_{2,1}(F_f/2)$ for functions in the class $\mathcal{S}^*_{ch}$.
\begin{theorem}\label{Th-22.11}
Let $f(z)=z+a_2z^2+a_3z^3+\cdots\in\mathcal{S}^*_{ch}$ and $ \gamma_{1}, \gamma_{2} $, $ \gamma_{3} $ are given by \eqref{eq-2.2}. Then we have 
\begin{align*}
	|\gamma_n|\leq \frac{1}{2n}\;\;\;\;\mbox{for}\;\;n=1,2,3.
\end{align*}
The inequality is sharp for the following functions:
\begin{align}\label{SF-2.8}
	f_1(z)=z\exp\left(\int_{0}^{z}\frac{t +\cosh(t)-1}{t}dt\right)=z+ z^2 +\frac{3}{4}z^3 +\frac{5}{12}z^4 +\cdots,\;\;z\in\mathbb{D}.
\end{align}

\begin{align}\label{SF-2.9}
	f_2(z)=z\exp\left(\int_{0}^{z}\frac{t^2 +\cosh(t^2)-1}{t}dt\right) =z+ \frac{1}{2}z^3 +\frac{1}{4}z^5 +\cdots,\;\;z\in\mathbb{D}.
\end{align}

\begin{align}\label{SF-2.10}
	f_3(z)=z\exp\left(\int_{0}^{z}\frac{t^3 +\cosh(t^3)-1}{t}dt\right) =z+ \frac{1}{3}z^4 +\frac{5}{36}z^7 +\cdots,\;\;z\in\mathbb{D}.
\end{align}
\end{theorem}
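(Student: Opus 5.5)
We write the defining subordination through a Schwarz function and reduce each $\gamma_n$ to a classical coefficient functional over $\mathcal{P}$, to which Lemma \ref{lem-2.3} and Lemma \ref{lem-2.4} apply directly.

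\textbf{Step 1 (the $\gamma_n$ as coefficients of $\varphi_0\circ w$).} Since $f\in\mathcal{S}^*_{ch}$, there is a Schwarz function $w(z)=w_1z+w_2z^2+w_3z^3+\cdots$ with $zf'(z)/f(z)=\varphi_0(w(z))$. Differentiating \eqref{eq-2.1} gives
\begin{align*}
\frac{zf'(z)}{f(z)}-1=zF_f'(z)=\sum_{n=1}^{\infty}2n\gamma_n z^n .
\end{align*}
From $\varphi_0(z)=1+z+\tfrac12 z^2+\tfrac1{24}z^4+\cdots$ we have $\varphi_0(w)-1=w+\tfrac12 w^2+O(w^4)$. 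Comparing coefficients of $z,z^2,z^3$ therefore yields
\begin{align*}
2\gamma_1=w_1,\qquad 4\gamma_2=w_2+\tfrac12 w_1^2,\qquad 6\gamma_3=w_3+w_1w_2 .
\end{align*}

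\textbf{Step 2 (passing to $\mathcal{P}$).} Write $w=(p-1)/(p+1)$ with $p\in\mathcal{P}$ as in \eqref{eq-2.4}. Then
\begin{align*}
w_1=\tfrac{c_1}{2},\qquad w_2=\tfrac{c_2}{2}-\tfrac{c_1^2}{4},\qquad w_3=\tfrac{c_3}{2}-\tfrac{c_1c_2}{2}+\tfrac{c_1^3}{8}.
\end{align*}
Substituting into Step 1, the cubic terms cancel in the third relation, and we obtain
\begin{align*}
\gamma_1=\tfrac{c_1}{4},\qquad \gamma_2=\tfrac18\left(c_2-\tfrac14 c_1^2\right),\qquad \gamma_3=\tfrac1{12}\left(c_3-\tfrac12 c_1c_2\right).
\end{align*}

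\textbf{Step 3 (the three bounds).}
\begin{itemize}
\item[] For $\gamma_1$: since $|c_1|\le 2$, we get $|\gamma_1|\le 1/2$.
\item[] For $\gamma_2$: Lemma \ref{lem-2.3} with $v=1/4\in[0,1]$ gives $|c_2-\tfrac14c_1^2|\le 2$, hence $|\gamma_2|\le 1/4$.
\item[] For $\gamma_3$: apply Lemma \ref{lem-2.4} with $B=1/4$ and $D=0$. The hypothesis holds because $B(2B-1)=-1/8\le 0\le 1/4=B$. This gives $|c_3-\tfrac12c_1c_2|\le 2$, hence $|\gamma_3|\le 1/6$.
\end{itemize}
The only point requiring care is the $\gamma_3$ computation: one must verify that the $c_1^3$ coefficient really vanishes, so that the pair $(B,D)$ lands inside the admissible range of Lemma \ref{lem-2.4}. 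If a residual $c_1^3$ term had appeared, I would instead use the parametrization of Lemma \ref{lem-2.1} together with Lemma \ref{lem-2.2}.

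\textbf{Step 4 (sharpness).} Take $w(z)=z^n$ for $n=1,2,3$, i.e. $zf'(z)/f(z)=\varphi_0(z^n)$. These give exactly $f_1,f_2,f_3$ in \eqref{SF-2.8}--\eqref{SF-2.10}. For $w(z)=z^n$, Step 1 gives $2n\gamma_n=1$, that is $\gamma_n=1/(2n)$, so equality is attained. As a cross-check, the expansions agree with \eqref{eq-2.2}: for $f_1$, $\gamma_1=a_2/2=1/2$; for $f_2$, $\gamma_2=\tfrac12(a_3-\tfrac12a_2^2)=\tfrac14$; for $f_3$, $\gamma_3=\tfrac12 a_4=\tfrac16$.
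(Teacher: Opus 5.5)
Your proof is correct and follows the paper's route: you reach the same expressions $\gamma_1=c_1/4$, $\gamma_2=\frac{1}{8}\left(c_2-\frac{1}{4}c_1^2\right)$ and $\gamma_3=\frac{1}{12}\left(c_3-\frac{1}{2}c_1c_2\right)$, and you close with Lemma \ref{lem-2.3} ($v=1/4$) and Lemma \ref{lem-2.4} ($B=1/4$, $D=0$), exactly as the paper does. The differences are only streamlining. Reading $\gamma_n$ directly off $zf'(z)/f(z)-1=\sum_{n\geq 1}2n\gamma_n z^n=\varphi_0(w(z))-1$ avoids computing $a_2,a_3,a_4$ first. Choosing $w(z)=z^n$ gives $\gamma_n=1/(2n)$ for $f_n$ without expanding the extremal functions.
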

\begin{proof}[\bf Proof of Theorem \ref{Th-22.11}]
Let $f\in\mathcal{S}^*_{ch}$. Then there exists an analytic function $\omega\in\mathcal{H}$ with $\omega(0)=0$ and $|\omega(z)|<1$ for $z\in\mathbb{D}$, such that 
\begin{align}\label{eq-2.8}
	\frac{zf^{\prime}(z)}{f(z)}=\varphi_{0}(\omega(z))=\omega(z)+\cosh(\omega(z)),\;\; z\in\mathbb{D}.
\end{align}
Let $p\in\mathcal{P}$. Then, by using the definition of subordination, we have
\begin{align*}
	p(z)=\frac{1+\omega(z)}{1-\omega(z)}=1+c_1z+c_2z^2+\cdots,\; z\in\mathbb{D}.
\end{align*} 
Hence, it is evident that
\begin{align}\label{eq-2.9}
	\omega(z)&=\nonumber\frac{p(z)-1}{p(z)+1}\\&\nonumber=\frac{c_1}{2}z+\frac{1}{2}\left(c_2-\frac{c_1^2}{2}\right)z^2+\frac{1}{2}\left(c_3-c_1c_2+\frac{c_1^3}{4}\right)z^3\\&\quad+\frac{1}{2}\left(c_4-c_1c_3+\frac{3c_1^2c_2}{4}-\frac{c_2^2}{2}-\frac{c_1^4}{8}\right)z^4+\cdots
\end{align}
in $\mathbb{D}$. Then $p$ is analytic in $\mathbb{D}$ with $p(0)=1$ and has positive real part in $\mathbb{D}$. In view of \eqref{eq-2.8} together with $\varphi_{0}(\omega(z))$, a tedious computation shows that 
\begin{align}\label{eq-2.10}
	\nonumber\omega(z)+\cosh(\omega(z))&=1+\frac{1}{2}c_1z+\left(-\frac{1}{8}c_1^2 +\frac{1}{2}c_2\right)z^2+ \left(-\frac{1}{4}c_1c_2 +\frac{1}{2} c_3\right)z^3\\&\quad+\left(-\frac{1}{4}c_1c_3 +\frac{13}{384}c_1^4-\frac{1}{8}c_2^2+\frac{1}{2}c_4\right)z^4+\cdots
\end{align}
and 
\begin{align}\label{eq-2.11}
	\frac{zf^{\prime}(z)}{f(z)}&=1+a_2z+(2a_3-a_2^2)z^2+(3a_4-3a_2a_3+a_2^3)z^3\\&\quad+(4a_5-2a_3^2-4a_2a_4+4a_2^2a_3-a_2^4)z^4+\cdots\nonumber.
\end{align}
Thus, using \eqref{eq-2.10} and \eqref{eq-2.11}, we compute from \eqref{eq-2.8} that
\begin{align}\label{eq-2.12}
	\begin{cases}
		a_2=\dfrac{1}{2}c_1,\vspace{2mm}\\
		a_3=\dfrac{1}{16}c_1^2+\dfrac{1}{4}c_2,\vspace{2mm}\\
		a_4=-\dfrac{1}{96}c_1^3+\dfrac{1}{24}c_1c_2+\dfrac{1}{6}c_3.
	\end{cases}
\end{align}
\noindent{\bf Sharp bounds of $\gamma_{1}$:} By using \eqref{eq-2.2} and \eqref{eq-2.12}, it is easy to that
\begin{align}\label{Eq-2.13}
	|\gamma_{1}|=\;\vline\frac{1}{2}a_{2}\;\vline=\frac{1}{4}|c_{1}|\leq \frac{1}{2}.
\end{align}
The desired inequality is thus obtained. The function $f_1$, which is defined in \eqref{SF-2.8} gives the sharpness of the inequality \eqref{Eq-2.13}. \vspace{2mm}

\noindent{\bf Sharp bounds of $\gamma_{2}$:} From \eqref{eq-2.2} and \eqref{eq-2.12}, we see that	
\begin{align*}
	|\gamma_{2}|&=\;\vline\frac{1}{2} \left(a_{3} -\dfrac{1}{2}a^2_{2} \right)\vline \vspace{2mm} \\&= \;\vline\frac{1}{2}\left(\frac{1}{16}c_1^2 +\frac{1}{4}c_2-\frac{1}{2}\left(\frac{1}{2}c_1\right)^2\right)\vline \vspace{2mm} \\&= \frac{1}{8}\;\vline\; c_2-\frac{1}{4}c^2_{1} \;\vline. 
\end{align*}
By using Lemma \ref{lem-2.3}, we obtain the desired inequality
\begin{align}\label{Eq-2.17}
	|\gamma_{2}|\leq\frac{1}{4}.
\end{align}
The function $f_2$, which is defined in \eqref{SF-2.9} gives the sharpness of the inequality \eqref{Eq-2.17}.\vspace{2mm}

\noindent{\bf Sharp bounds of $\gamma_{3}$:} By using \eqref{eq-2.2} and \eqref{eq-2.12}, we obtain
\begin{align*}
	|\gamma_{3}| &=\;\vline\frac{1}{2}\left(a_{4}- a_{2}a_{3} +\frac{1}{3}a^3_{2}\right)\vline \vspace{2mm} \\&=\; \vline\frac{1}{2} \left(-\frac{1}{96}c_1^3+\frac{1}{24} c_1c_2+\frac{1}{6}c_3- \frac{1}{32} c^3_{1} -\frac{1}{8}c_1 c_2 +\frac{1}{24} c^3_{1}\right)\vline \vspace{2mm} \\&=\frac{1}{12}\;\vline\; c_3 -2Bc_1 c_2 +Dc^3_1\;\vline,
\end{align*}
where $B=\frac{1}{4}$ and $D=0$. Therefore, it is easy to see that $0\leq B\leq 1$ and the inequality $B(2B-1)\leq D\leq B$. By Lemma \ref{lem-2.4}, we have the inequality
\begin{align}\label{Eq-2.18}
	|\gamma_{3}|\leq\frac{1}{6}.
\end{align}
The function $f_3$, defined in \eqref{SF-2.10}, establishes the sharpness of the inequality \eqref{Eq-2.18}. This completes the proof.
\end{proof}\vspace{2mm}	

Next, we obtain a result finding the sharp bound of the second Hankel determinant $ H_{2,1}(F_f/2) $ with logarithmic coefficients for functions in the class $ \mathcal{S}^*_{ch} $.
\begin{theorem}\label{Th-2.1}
Let $f\in\mathcal{S}^*_{ch}$ and has the series representation $f(z)=z+a_2z^2+a_3z^3+\cdots$, and $ \gamma_{1}, \gamma_{2} $, $ \gamma_{3} $ are given by \eqref{eq-2.2}. Then we have 
\begin{align*}
	|H_{2,1}(F_f/2)|\leq \frac{1}{16}.
\end{align*}
The inequality is sharp for the function $f_2$, which is defined in \eqref{SF-2.9}.
\end{theorem}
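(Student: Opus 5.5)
By the rotation invariance of $H_{2,1}(F_f/2)$ noted after \eqref{eq-2.3}, I may assume $c_1\ge 0$ in the representation \eqref{eq-2.9}, so that Lemma \ref{lem-2.1} applies. The first step is to express $H_{2,1}(F_f/2)$ in terms of the Carath\'eodory coefficients. From the proof of Theorem \ref{Th-22.11},
\[
\gamma_1=\tfrac14 c_1,\qquad \gamma_2=\tfrac18\Bigl(c_2-\tfrac14c_1^2\Bigr),\qquad \gamma_3=\tfrac1{12}\Bigl(c_3-\tfrac12c_1c_2\Bigr).
\]
Expanding $\gamma_1\gamma_3-\gamma_2^2$ then gives
\[
48\,H_{2,1}(F_f/2)=c_1c_3-\tfrac34c_2^2-\tfrac18c_1^2c_2-\tfrac3{64}c_1^4 .
\]
As a check, \eqref{eq-2.3} together with \eqref{eq-2.12} yields the same expression.

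Next I substitute \eqref{eq-2.5}--\eqref{eq-2.7} with $\tau_1=t\in[0,1]$ and collect terms in $\tau_2$ and $\tau_3$. The $t^4$ coefficients combine to $4-3-1-\tfrac34=-\tfrac34$, and the linear $\tau_2$ terms combine to $(8-6-1)\,t^2(1-t^2)$. This leads to
\[
48\,H_{2,1}(F_f/2)=-\tfrac34t^4+t^2(1-t^2)\tau_2-(1-t^2)(3+t^2)\tau_2^2+4t(1-t^2)(1-|\tau_2|^2)\tau_3 .
\]
Setting $x=|\tau_2|\in[0,1]$ and using $|\tau_3|\le1$, the triangle inequality gives $48|H_{2,1}(F_f/2)|\le \Psi(t,x)$, where
\[
\Psi(t,x)=\tfrac34t^4+t^2(1-t^2)x+(1-t^2)(3+t^2)x^2+4t(1-t^2)(1-x^2).
\]
If this crude estimate turns out to be insufficient, the fallback is to write the expression as $4t(1-t^2)\bigl[|A+B\tau_2+C\tau_2^2|+1-|\tau_2|^2\bigr]$ and apply Lemma \ref{lem-2.2}. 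I expect the triangle inequality to suffice, however.

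The main step is maximizing $\Psi$ over $[0,1]^2$. The coefficient of $x^2$ in $\Psi$ is
\[
(1-t^2)(3+t^2-4t)=(1-t^2)(1-t)(3-t)\ge0,
\]
so $\Psi$ is convex in $x$ and its maximum occurs at $x=0$ or $x=1$.
\begin{itemize}
\item At $x=1$: $\Psi(t,1)=\tfrac34t^4+(1-t^2)(3+2t^2)=3-t^2-\tfrac54t^4\le 3$.
\item At $x=0$: $g(t)=4t-4t^3+\tfrac34t^4$. Its derivative $g'(t)=4-12t^2+3t^3$ vanishes at a single point near $t\approx0.6$ in $[0,1]$. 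At that point $g\approx1.6<3$, and at the endpoints $g(0)=0$ and $g(1)=\tfrac34$.
\end{itemize}
The one point needing care is verifying this single-variable bound $\max g<3$ cleanly. A crude estimate such as $4t(1-t^2)\le \tfrac{8}{3\sqrt3}$ plus $\tfrac34$ already gives a value below $3$. Hence $48|H_{2,1}(F_f/2)|\le 3$, that is, $|H_{2,1}(F_f/2)|\le\tfrac1{16}$.

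For sharpness, equality requires $t=0$ and $|\tau_2|=1$, so $c_1=0$ and $c_2=2$, i.e.\ $p(z)=(1+z^2)/(1-z^2)$ and $\omega(z)=z^2$. This is exactly $f_2$ from \eqref{SF-2.9}, for which $a_2=a_4=0$ and $a_3=\tfrac12$. Then \eqref{eq-2.3} gives $H_{2,1}(F_{f_2}/2)=-\tfrac{12}{48}\cdot\tfrac14=-\tfrac1{16}$, so the bound is attained.
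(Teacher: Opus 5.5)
Your proof is correct and follows the paper's route: the Libera--Zlotkiewicz parametrization of Lemma~\ref{lem-2.1}, the triangle inequality with $|\tau_3|\le 1$, and a one-variable maximization that gives exactly the paper's bound $\frac{1}{192}(12-4t^2-5t^4)$. The only difference is that you bound $|A+B\tau_2+C\tau_2^2|$ termwise and use that the $x^2$-coefficient is nonnegative (equivalent to $|C|\ge 1$), instead of invoking Lemma~\ref{lem-2.2}. Since the $x$-coefficient $t^2(1-t^2)$ is also nonnegative, $\Psi$ is nondecreasing in $x$, so your separate check at $x=0$ is unnecessary. Your explicit rotation to $c_1\ge 0$ supplies a step the paper leaves implicit.
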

\begin{proof}[\bf Proof of Theorem \ref{Th-2.1}]
Let $f\in\mathcal{S}^*_{ch}$. Then there exists an analytic function $\omega\in\mathcal{H}$ with $\omega(0)=0$ and $|\omega(z)|<1$ for $z\in\mathbb{D}$, such that 
\begin{align}\label{eq-2.13}
	\frac{zf^{\prime}(z)}{f(z)}=\varphi_{0}(\omega(z))=\omega(z)+\cosh(\omega(z)),\;\; z\in\mathbb{D}.
\end{align}
Let $p\in\mathcal{P}$. Then, in view of the definition of subordination, we have
\begin{align*}
	p(z)=\frac{1+\omega(z)}{1-\omega(z)}=1+c_1z+c_2z^2+\cdots,\; z\in\mathbb{D}.
\end{align*} 
It is easy to see that
\begin{align}\label{eq-2.14}
	\omega(z)&=\nonumber\frac{p(z)-1}{p(z)+1}\\&\nonumber=\frac{c_1}{2}z+\frac{1}{2}\left(c_2-\frac{c_1^2}{2}\right)z^2+\frac{1}{2}\left(c_3-c_1c_2+\frac{c_1^3}{4}\right)z^3\\&\quad+\frac{1}{2}\left(c_4-c_1c_3+\frac{3c_1^2c_2}{4}-\frac{c_2^2}{2}-\frac{c_1^4}{8}\right)z^4+\cdots
\end{align}
in $\mathbb{D}$. Then $p$ is analytic in $\mathbb{D}$ with $p(0)=1$ and has positive real part in $\mathbb{D}$. In view of \eqref{eq-2.13} together with $\varphi_{0}(\omega(z))$, a tedious computation shows that 
\begin{align}\label{eq-2.15}
	\omega(z)+\cosh(\omega(z))&=1+\frac{1}{2}c_1z+\left(-\frac{1}{8}c_1^2 +\frac{1}{2}c_2\right)z^2+\left(-\frac{1}{4}c_1c_2+\frac{1}{2} c_3\right)z^3\\&\quad+\left(-\frac{1}{4}c_1c_3+\frac{13}{384}c_1^4-\frac{1}{8}c_2^2+\frac{1}{2}c_4\right)z^4+\cdots\nonumber
\end{align}
and 
\begin{align}\label{eq-2.16}
	\frac{zf^{\prime}(z)}{f(z)}&=1+a_2z+(2a_3-a_2^2)z^2+(3a_4-3a_2a_3+a_2^3)z^3\\&\quad+(4a_5-2a_3^2-4a_2a_4+4a_2^2a_3-a_2^4)z^4+\cdots\nonumber.
\end{align}
Thus, using \eqref{eq-2.15} and \eqref{eq-2.16}, we see from \eqref{eq-2.13} that
\begin{align}\label{eq-2.17}
	\begin{cases}
		a_2=\dfrac{1}{2}c_1,\vspace{2mm}\\
		a_3=\dfrac{1}{16}c_1^2+\dfrac{1}{4}c_2,\vspace{2mm}\\
		a_4=-\dfrac{1}{96}c_1^3+\dfrac{1}{24}c_1c_2+\dfrac{1}{6}c_3,\vspace{2mm}\\
		a_5=\dfrac{1}{192}c_1^4-\dfrac{5}{192}c_2c_1^2+\dfrac{1}{48}c_1c_3+\dfrac{1}{8}c_4.
	\end{cases}
\end{align}		
A simple computation by using \eqref{eq-2.3} and \eqref{eq-2.11}, shows
that
\begin{align}\label{eq-2.18}
	H_{2,1}(F_f/2)&\nonumber=\frac{1}{48}\left(a_2^4-12a_3^2+12a_2a_4\right)\\&=\frac{1}{3072}\left(-3c_1^4-8c_1^2c_2-48c_2^2+64c_1c_3\right).
\end{align}
By Lemma \ref{lem-2.1} and \eqref{eq-2.12}, we obtain
\begin{align}\label{eq-2.19}
	H_{2,1}(F_f/2)\nonumber&=\frac{1}{192} \bigg(-3\tau_1^4 +4\tau_1^2\tau_2\left(1-\tau_1^2\right)-4\tau_2^2(3+\tau_1^2)(1-\tau_1^2)\\&\quad+ 16\tau_1\tau_3 \left(1-\tau_1^2\right) \left(1-|\tau_2|^2\right)\bigg).
\end{align}
	 
 We now explore three possible cases involving $\tau_1$. \vspace{1.2mm}
	 
\noindent{\bf Case-I.} Let $\tau_1=1$. Then, from \eqref{eq-2.19} we see that 
\begin{align*}
	|H_{2,1}(F_f/2)|=\frac{1}{64}\approx 0.015625.
\end{align*}
\noindent{\bf Case-II.} Let $\tau_1=0$. Then, from \eqref{eq-2.19} we get 
\begin{align*}  
	|H_{2,1}(F_f/2)|=\bigg|\frac{1}{192}\left(-12\tau_2^2\right)\bigg|\leq\frac{1}{16}\approx 0.0625.
\end{align*}
\noindent{\bf Case-III.} Let $\tau_1\in (0, 1)$. Applying triangle inequality in \eqref{eq-2.19} and using the fact that $|\tau_3|\leq 1$, we obtain
\begin{align}\label{eq-2.20}
	|H_{2,1}(F_f/2)|&\nonumber\leq\frac{1}{192}\bigg(\bigg|-3\tau_1^4+4\tau_1^2\tau_2(1-\tau_1^2)-4\tau_2^2(1-\tau_1^2)(3+\tau_1^2)\bigg|\\&\nonumber\quad+16\tau_1(1-\tau_1^2)(1-|\tau_2|^2)\bigg)\\&=\frac{1}{12}\tau_1(1-\tau_1^2)\left(\vline\; A+B\tau_2+C\tau_2^2\;\vline +1-|\tau_2|^2\right)\nonumber\\&:=\frac{1}{12}\tau_1(1-\tau_1^2)Y(A, B, C),
\end{align}
where
\begin{align*}
	A=\frac{-3\tau_1^3}{16(1-\tau_1^2)},\;\; B=\frac{\tau_1}{4},\;\;\mbox{and}\;\; C=-\frac{(3+\tau_1^2)}{4\tau_1}.
\end{align*}
We note that $AC>0$. Hence, we can apply case (i) of Lemma \ref{lem-2.2} and discuss the following cases.\\
			
A simple computation shows that
\begin{align*}
	|B|-2(1-|C|)=\frac{\tau_1}{4}-2\left(1-\frac{3+\tau_1^2}{4\tau_1}\right)=\frac{6-8\tau_1+3\tau_1^2}{4\tau_1}>0
\end{align*}
for all $\tau_1\in (0, 1)$. \textit{i.e.,} $|B|>2(1-|C|)$. Thus from Lemma \ref{lem-2.2}, we see that 
\begin{align*}
	Y(A, B, C)=|A|+|B|+|C|=\frac{12-4\tau_1^2-5\tau_1^4} {16\tau_1\left(1-\tau_1^2\right)}.
\end{align*}
In view of the inequality \eqref{eq-2.20} it follows that
\begin{align}\label{eq-2.21}
	\nonumber|H_{2,1}(F_f/2)|&=\frac{1}{12}\tau_1(1-\tau_1^2)\left(|A|+|B|+|C|\right)\\&\nonumber=\frac{1}{192}\left(12-4\tau_1^2-5\tau_1^4\right)\\&=\frac{1}{192}\phi(\tau_1),
\end{align}
where $\phi(t)=12-4t^2-5t^4$ for $t\in [0, 1]$. A simple computation shows that $\phi^{\prime}(t)=-8t-20t^3<0$ for all $t\in [0,1]$ which shows that $\phi$ is a decreasing function on $[0, 1]$.
Hence, the maximum of $\phi(t)$ is attained at $t=0$, and the maximum value is $12$. Hence, from \eqref{eq-2.21}, we see that
\begin{align*}
	|H_{2,1}(F_{f/2})|\leq \frac{1}{16}.
\end{align*}
Thus the desired inequality is established.\\

By summarizing Cases I, II, and III, we obtain the desired inequality of the result. The function $f_2$, which is defined in \eqref{SF-2.9} gives the sharpness of the desired inequality. This completes the proof.
\end{proof}
\section{\bf Sharp bound of logarithmic coefficients for inverse functions in the class $\mathcal{S}^*_{ch}$}
Let $F$ be the inverse function of $f\in\mathcal{S}$ defined in a neighborhood of the origin with the Taylor series expansion
\begin{align}\label{eq-3.1}
	F(w):=f^{-1}(w)= w+\sum_{n=2}^{\infty} A_n w^n,
\end{align}
where we may choose $|w|<1/4$, as we know that the famous Koebe’s $1/4$-theorem ensures that, for each univalent function $f$ defined in $\mathbb{D}$, it inverse $f^{-1}$ exists at least on a disc of radius $1/4$. Using a variational method, L$\ddot{\mbox{o}}$wner \cite{Lowner-IMA-1923} has obtained the sharp estimate $ |A_n|\leq K_n \;\mbox{for each}\; n, $ where $K_n=(2n)!/(n!(n+1)!)$ and $K(w)= w +K_2 w^2 +K_3 w^3 +\cdots$ is the inverse of the K\"oebe function. There has been a good deal of interest in determining the behavior of the inverse coefficients of $f$ given in \eqref{eq-1.1} when the corresponding function $f$ is restricted to some proper geometric subclasses of $\mathcal{S}$.\vspace{2mm}

Let $f(z)=\displaystyle z+ \sum_{n=2}^{\infty}a_nz^n$ be a function in class $\mathcal{S}$. Since $f(f^{-1}(w))=w$ and using \eqref{eq-3.1} we obtain
\begin{align}\label{eq-3.2}
	\begin{cases}
		A_2= -a_2, \\ A_3=-a_3 +2a^2_{2}, \\ A_4=- a_4 +5a_2 a_3 -5a^3_{2}, \\ A_5=- a_5+6a_4 a_2- 21a_3 a^2_{2} +3a^2_{3} +14a^4_{2}.
	\end{cases}
\end{align}
The notation of the logarithmic coefficient of inverse of $f$ has been studied by Ponnusamy \textit{et al.} \cite{Ponnusamy-Sharma-Wirths-RM-2018}. As with $f$, the logarithmic inverse coefficients $\Gamma_n:=\Gamma_n(F)$, $n\in\mathbb{N}$, of $F$ are defined by the equation
\begin{align}\label{eq-3.3}
	F_{f^{-1}}(w):=\log\left(\frac{f^{-1}(w)}{w}\right)=2\sum_{n=1}^{\infty} \Gamma_n(F) w^n \;\;\;\; \mbox{for} \;\;|w|<1/4.
\end{align}
In $2018$, Ponnusamy \textit{et al.} \cite{Ponnusamy-Sharma-Wirths-RM-2018} obtained the sharp bound for the logarithmic inverse coefficients for functions in the class $\mathcal{S}$. In fact, Ponnusamy \emph{et al.}  \cite{Ponnusamy-Sharma-Wirths-RM-2018} established that for $f\in\mathcal{S}$ 
\begin{align*}
	|\Gamma_n(F)|\leq\frac{1}{2n}\binom{2n}{n}
\end{align*}
and showed that the equality holds only for the Koebe function or  its rotations. By differentiating \eqref{eq-3.3} together with \eqref{eq-3.1}, using \eqref{eq-3.2} and then equating coefficients, we obtain
\begin{align}\label{eq-3.4}
	\begin{cases}
		\Gamma_1=-\dfrac{1}{2}a_2, \vspace{1.5mm}\\ \Gamma_2=-\dfrac{1}{2}\left(a_3 -\dfrac{3}{2}a^2_{2}\right), \vspace{1.5mm} \\ \Gamma_3=-\dfrac{1}{2}\left(a_4 -4a_2 a_3 +\dfrac{10}{3}a^3_{2}\right).
	\end{cases}
\end{align}

In \cite{Kowalczyk-Lecko-BAMS-2022}, Kowalczyk and Lecko  proposed a Hankel determinant whose elements are the logarithmic coefficients of $f\in\mathcal{S}$, realizing the extensive use of these coefficients. Inspired by these ideas, S\"ummer \textit{et al.} in \cite{Sumer-Lecko-BMMSS-2023} started the investigation of the Hankel determinants $H_{q,n}(F_{f^{-1}}/2)$, wherein the entries of Hankel matrices are logarithmic coefficient of $f^{-1}$ with $f\in\mathcal{S}$. It follows that
\begin{align}\label{eq-3.5}
	H_{2,1}(F_{f^{-1}}/2):=\Gamma_{1}\Gamma_{3} -\Gamma^2_{2}=\frac{1}{48} \left(13a^4_2 -12a^2_2 a_3 - 12 a^2_3 + 12 a_2 a_4\right).
\end{align}
It is now appropriate to remark that $H_{2,1}(F_{f^{-1}}/2)$ is invariant under rotation, since for $f_{\theta}(z):=e^{-i\theta} f(e^{i\theta}z)$, $\theta\in\mathbb{R}$ when $f\in\mathcal{S}$, we have
\begin{align*}
	H_{2,1}(F_{f^{-1}_{\theta}}/2)=\frac{e^{4i\theta}}{48}\left(13a^4_2 -12a^2_2 a_3 - 12 a^2_3 + 12 a_2 a_4\right)=e^{4i\theta}H_{2,1} (F_{f^{-1}_{\theta}}/2).
\end{align*}\vspace{2mm}

We obtain the following result where, we obtain the sharp bound of the logarithmic coefficients of inverse functions in the class $ \mathcal{S}^*_{ch} $.
\begin{theorem}\label{Th-3.1}
Let $f(z)=z+a_2z^2+a_3z^3+\cdots\in\mathcal{S}^*_{ch}$ and $ \Gamma_{1}, \Gamma_{2} $ are given by \eqref{eq-3.4}. Then we have 
\begin{align*}
	|\Gamma_1|\leq \frac{1}{2}\;\;\;\;\mbox{and}\;\;\;\;|\Gamma_2|\leq \frac{3}{8}.
\end{align*}
The inequalities are sharp for the functions $f_1$, defined in \eqref{SF-2.8}.
\end{theorem}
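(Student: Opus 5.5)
Since $\Gamma_1$ and $\Gamma_2$ are given by \eqref{eq-3.4} in terms of $a_2,a_3$, the plan is to express them through the Carathéodory coefficients and then use the coefficient lemmas already stated. As in the proof of Theorem \ref{Th-22.11}, write $zf'(z)/f(z)=\varphi_0(\omega(z))$ with $\omega=(p-1)/(p+1)$ and $p\in\mathcal{P}$. Then \eqref{eq-2.12} gives
\begin{align*}
a_2=\tfrac12 c_1,\qquad a_3=\tfrac1{16}c_1^2+\tfrac14 c_2 .
\end{align*}

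For $\Gamma_1$ we have $\Gamma_1=-\tfrac12 a_2=-\tfrac14 c_1$. Hence $|\Gamma_1|\le \tfrac12$, using $|c_1|\le 2$.

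For $\Gamma_2$, substitute into \eqref{eq-3.4}:
\begin{align*}
\Gamma_2=-\tfrac12\Big(\tfrac1{16}c_1^2+\tfrac14c_2-\tfrac38c_1^2\Big)=-\tfrac18\Big(c_2-\tfrac54c_1^2\Big).
\end{align*}
Here $v=5/4>1$, so Lemma \ref{lem-2.3} gives $|c_2-\tfrac54c_1^2|\le 4v-2=3$. Therefore $|\Gamma_2|\le 3/8$. The only real point of care is the arithmetic in combining $a_3$ and $\tfrac32a_2^2$, i.e.\ confirming that $v$ comes out as $5/4$ and so falls in the branch $v>1$.

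For sharpness, take $f=f_1$ from \eqref{SF-2.8}. It corresponds to $\omega(z)=z$, i.e.\ $p(z)=(1+z)/(1-z)$, which is the extremal function of Lemma \ref{lem-2.3} for $v>1$. Its coefficients are $a_2=1$ and $a_3=3/4$. This gives $\Gamma_1=-1/2$ and $\Gamma_2=-\tfrac12(3/4-3/2)=3/8$, so both bounds are attained.
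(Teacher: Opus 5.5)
Your proposal is correct and matches the paper's proof: $\Gamma_1=-\tfrac14 c_1$ is bounded using $|c_1|\le 2$, and $\Gamma_2=-\tfrac18\bigl(c_2-\tfrac54 c_1^2\bigr)$ is bounded by Lemma~\ref{lem-2.3} in the branch $v=5/4>1$, with sharpness from $f_1$ (i.e.\ $p(z)=(1+z)/(1-z)$). Your explicit check that $a_2=1$ and $a_3=3/4$ give $\Gamma_1=-1/2$ and $\Gamma_2=3/8$ fills in a detail the paper leaves implicit.
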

\begin{proof}[\bf Proof of Theorem \ref{Th-3.1}]
From \eqref{eq-2.17} and \eqref{eq-3.4}, we have
\begin{align*}
	|\Gamma_1|=\;\vline-\frac{1}{2}a_2\;\vline=\vline-\frac{1}{4}c_1\vline \leq\frac{1}{2}.
\end{align*}
	
By using \eqref{eq-2.17} and \eqref{eq-3.4}, we have
\begin{align*}
	|\Gamma_2|&=\;\vline-\dfrac{1}{2}\left(a_3 -\dfrac{3}{2}a^2_{2} \right)\vline\\&=\; \vline-\dfrac{1}{2}\left(\dfrac{1}{16}c_1^2 +\dfrac{1}{4}c_2 -\dfrac{3}{8}c^2_{1} \right)\vline \\&=\frac{1}{8}\;\vline\; c_2 -\frac{5}{4}c^2_{1} \;\vline.
\end{align*}
By Lemma \ref{lem-2.3}, we obtain
\begin{align*}
	|\Gamma_2|\leq \frac{3}{8}.
\end{align*}
The function $f_1$, which is defined in \eqref{SF-2.8} gives the sharpness of inequality for $\Gamma_{1}$ and $\Gamma_{2}$. This completes the proof.
\end{proof}\vspace{2mm}

Next, we obtain a result finding the sharp bound of the second Hankel determinant $H_{2,1}(F_{f^{-1}}/2)$ with logarithmic coefficients for inverse functions in the class $ \mathcal{S}^*_{ch} $.
\begin{theorem}\label{Th-3.2}
Let $f\in\mathcal{S}^*_{ch}$ and has the series representation $f(z)=z+a_2z^2+a_3z^3+\cdots$, and $ \Gamma_{1}, \Gamma_{2} $, $ \Gamma_{3} $ are given by \eqref{eq-3.4}. Then we have 
\begin{align*}
	|H_{2,1}(F_{f^{-1}}/2)|\leq \frac{3}{44}.
\end{align*}
The inequality is sharp.
\end{theorem}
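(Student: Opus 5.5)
The plan is to follow the scheme of the proof of Theorem \ref{Th-2.1}: reduce $H_{2,1}(F_{f^{-1}}/2)$ to a function of the parameters $\tau_1,\tau_2,\tau_3$ of Lemma \ref{lem-2.1}, and then apply Lemma \ref{lem-2.2}.

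\textbf{Step 1: express $H_{2,1}(F_{f^{-1}}/2)$ in the $c_k$.} By rotation invariance we may assume $c_1\ge 0$. Inserting \eqref{eq-2.17} into \eqref{eq-3.5} and collecting terms gives
\begin{align*}
H_{2,1}(F_{f^{-1}}/2)=\frac{1}{48}\left(\frac{33}{64}c_1^4-\frac{7}{8}c_1^2c_2-\frac34c_2^2+c_1c_3\right).
\end{align*}

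\textbf{Step 2: pass to the parameters $\tau_j$.} Substituting \eqref{eq-2.5}--\eqref{eq-2.7}, with $t=\tau_1\in[0,1]$ and $u=\tau_2$, should give
\begin{align*}
H_{2,1}(F_{f^{-1}}/2)=\frac{1}{48}\Big(\tfrac94 t^4-5t^2(1-t^2)u-(1-t^2)(3+t^2)u^2+4t(1-t^2)(1-|u|^2)\tau_3\Big).
\end{align*}

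\textbf{Step 3: the endpoint cases.} For $t=1$ the value is $3/64$. For $t=0$ it is at most $1/16$. Both are below $3/44$.

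\textbf{Step 4: the interior case $t\in(0,1)$.} Use the triangle inequality and $|\tau_3|\le1$ to get
\begin{align*}
|H_{2,1}(F_{f^{-1}}/2)|\le\frac{t(1-t^2)}{12}\,Y(A,B,C),
\end{align*}
where
\begin{align*}
A=\frac{9t^3}{16(1-t^2)},\qquad B=-\frac{5t}{4},\qquad C=-\frac{3+t^2}{4t}.
\end{align*}
Here $AC<0$, so, unlike in Theorem \ref{Th-2.1}, part (ii) of Lemma \ref{lem-2.2} is needed. Since $|C|>1$ for $t\in(0,1)$, we have $2(1-|C|)<0$ and $C^{-2}-1<0$. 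Hence the first subcase of (ii) is empty. The second subcase also fails, because $-4AC(C^{-2}-1)<0$ while $B^2\ge0$. So we land in the regime $R(A,B,C)$.

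The next task is to split $(0,1)$ according to the inequalities $|C|(|B|+4|A|)\le|AB|$ and $|AB|\le |C|(|B|-4|A|)$. Near $t=0$ the quantity $|C|$ dominates and $|B|-4|A|>0$, so the branch $-|A|+|B|+|C|$ should apply. Elsewhere the branch
\begin{align*}
(|A|+|C|)\sqrt{1-\frac{B^2}{4AC}}
\end{align*}
should apply, and possibly $|A|+|B|-|C|$ near $t=1$. On each piece, multiplying by $t(1-t^2)/12$ gives an explicit function of $t$ to be maximized by elementary calculus. On the first branch it is $\tfrac{1}{192}(12+20t^2-29t^4)$, which stays below $3/44$. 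The value $3/44$ must therefore come from the square-root branch, so I expect its maximum to be attained at an interior critical point $t_0$. I would locate $t_0$ by squaring the expression and differentiating.

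\textbf{Step 5: sharpness.} Take $\tau_1=t_0$, choose $\tau_2=z_0$ as the maximizing point in Lemma \ref{lem-2.2}(ii) (interior of $\mathbb{D}$ in the square-root case), and choose $\tau_3\in\mathbb{T}$ with the phase aligning the two terms. Lemma \ref{lem-2.1} then supplies the unique $p$ with these parameters. Setting $\omega=(p-1)/(p+1)$ and defining $f$ through $zf'/f=\varphi_0(\omega)$ gives the extremal function.

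\textbf{Main obstacle.} The hard part is Step 4. One must determine exactly which $R$-branch holds on which subinterval, and then carry out the one-variable maximization of the square-root branch to obtain precisely $3/44$. If the algebra does not close to this value, I would recheck the coefficient $33/64$ and the sign of $B$ before anything else.
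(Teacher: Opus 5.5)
\textbf{There is a genuine gap in Step 4, and it propagates into Step 5.} Your overall route is the paper's: parametrize via Lemma \ref{lem-2.1}, apply the triangle inequality in $\tau_3$, then use Lemma \ref{lem-2.2}(ii) with $A=\frac{9t^3}{16(1-t^2)}$, $B=-\frac{5t}{4}$, $C=-\frac{3+t^2}{4t}$. Steps 1--3 are correct, and so is your elimination of the first two subcases of (ii).

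\textbf{The first-branch formula is wrong.} Since $\frac{t(1-t^2)}{12}|C|=\frac{(1-t^2)(3+t^2)}{48}=\frac{12-8t^2-4t^4}{192}$, one gets
\[
\frac{t(1-t^2)}{12}\bigl(-|A|+|B|+|C|\bigr)=\frac{1}{192}\bigl(12+12t^2-33t^4\bigr),
\]
not $\frac{1}{192}(12+20t^2-29t^4)$. Your formula does not even stay below $3/44$: at the right end of its range of validity it is about $0.080$.

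\textbf{The maximum lies on this branch, not on the square-root branch.} The branch $-|A|+|B|+|C|$ holds exactly when $|AB|\le |C|(|B|-4|A|)$. This is equivalent to $101t^4+148t^2-60\le 0$, i.e. $t\le t''=\bigl((4\sqrt{721}-74)/101\bigr)^{1/2}\approx 0.5751$. The branch $|A|+|B|-|C|$ never occurs, since its condition reduces to $60+68t^2-29t^4\le 0$. The correct polynomial $12+12t^2-33t^4$ attains its maximum $144/11$ at $t_0=\sqrt{2/11}<t''$, and this gives exactly $3/44$. On $(t'',1)$ the square-root branch equals
\[
\frac{1}{192}\,(12-8t^2+5t^4)\sqrt{\frac{52-16t^2}{9(3+t^2)}} .
\]
This function is decreasing, so its supremum is its value at $t''$, about $0.0644<3/44$. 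There is no interior critical point there producing $3/44$. Carried out as written, your plan would not reach the stated bound.

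\textbf{Consequence for Step 5.} On the branch $-|A|+|B|+|C|$, the maximum in $Y(A,B,C)$ is attained on the boundary, at $\tau_2=1$, not at an interior point. Indeed, with $A>0$ and $B,C<0$ one has $|A+B+C|=-A+|B|+|C|$, and $1-|\tau_2|^2=0$. So $\tau_3$ plays no role, and the extremal function comes from the second case of Lemma \ref{lem-2.1}:
\[
p(z)=\frac{1+2\sqrt{2/11}\,z+z^2}{1-z^2}.
\]
This gives $c_1=c_3=2\sqrt{2/11}$ and $c_2=2$, and your Step 1 formula then yields $H_{2,1}(F_{f^{-1}}/2)=-3/44$.
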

\begin{proof}[\bf Proof of Theorem \ref{Th-3.2}]
Let $f\in\mathcal{S}^*_{ch}$. Then there exists an analytic function $\omega\in\mathcal{H}$ with $\omega(0)=0$ and $|\omega(z)|<1$ for $z\in\mathbb{D}$, such that 
\begin{align}\label{eq-3.6}
	\frac{zf^{\prime}(z)}{f(z)}=\varphi_{0}(\omega(z))=\omega(z)+\cosh(\omega(z)),\;\; z\in\mathbb{D}.
\end{align}
Let $p\in\mathcal{P}$. By the definition of subordination, we have
\begin{align*}
	p(z)=\frac{1+\omega(z)}{1-\omega(z)}=1+c_1z+c_2z^2+\cdots,\; z\in\mathbb{D}.
\end{align*} 
A simple computation shows that
\begin{align}\label{eq-3.7}
	\omega(z)&=\nonumber\frac{p(z)-1}{p(z)+1}\\&\nonumber=\frac{c_1}{2}z+\frac{1}{2}\left(c_2-\frac{c_1^2}{2}\right)z^2+\frac{1}{2}\left(c_3-c_1c_2+\frac{c_1^3}{4}\right)z^3\\&\quad+\frac{1}{2}\left(c_4-c_1c_3+\frac{3c_1^2c_2}{4}-\frac{c_2^2}{2}-\frac{c_1^4}{8}\right)z^4+\cdots
\end{align}
in $\mathbb{D}$. Then $p$ is analytic in $\mathbb{D}$ with $p(0)=1$ and has positive real part in $\mathbb{D}$. In view of \eqref{eq-3.6} together with $\varphi_{0}(\omega(z))$, a tedious computation shows that 
\begin{align}\label{eq-3.8}
	\omega(z)+\cosh(\omega(z))&=1+\frac{1}{2}c_1z+\left(-\frac{1}{8}c_1^2 +\frac{1}{2}c_2\right)z^2+\left(-\frac{1}{4}c_1c_2+\frac{1}{2} c_3\right)z^3\\&\quad+\left(-\frac{1}{4}c_1c_3+\frac{13}{384}c_1^4-\frac{1}{8}c_2^2+\frac{1}{2}c_4\right)z^4+\cdots\nonumber
\end{align}
and 
\begin{align}\label{eq-3.9}
	\frac{zf^{\prime}(z)}{f(z)}&=1+a_2z+(2a_3-a_2^2)z^2+(3a_4-3a_2a_3+a_2^3)z^3\\&\quad+(4a_5-2a_3^2-4a_2a_4+4a_2^2a_3-a_2^4)z^4+\cdots\nonumber.
\end{align}
Thus, using \eqref{eq-3.8} and \eqref{eq-3.9}, we see from \eqref{eq-3.6} that
\begin{align}\label{eq-3.10}
	\begin{cases}
		a_2=\dfrac{1}{2}c_1,\vspace{2mm}\\
		a_3=\dfrac{1}{16}c_1^2+\dfrac{1}{4}c_2,\vspace{2mm}\\
		a_4=-\dfrac{1}{96}c_1^3+\dfrac{1}{24}c_1c_2+\dfrac{1}{6}c_3.
	\end{cases}
\end{align}		
A simple computation by using \eqref{eq-3.5} and \eqref{eq-3.10}, shows
that
\begin{align}\label{eq-3.11}
	H_{2,1}(F_{f^{-1}}/2)&\nonumber=\frac{1}{48} \left(13a^4_2 -12a^2_2 a_3 - 12 a^2_3 + 12 a_2 a_4\right)\\& =\frac{1}{3072}\left(33c_1^4-56c_1^2c_2-48c_2^2+64c_1c_3\right).
\end{align}
By Lemma \ref{lem-2.1} and \eqref{eq-3.11}, we obtain
\begin{align}\label{eq-3.12}
	H_{2,1}(F_{f^{-1}}/2)\nonumber&=\frac{1}{192} \bigg(9\tau_1^4 -20\tau_1^2\tau_2\left(1-\tau_1^2\right)-4\tau_2^2(3+\tau_1^2)(1-\tau_1^2)\\&\quad+ 16\tau_1\tau_3 \left(1-\tau_1^2\right) \left(1-|\tau_2|^2\right)\bigg).
\end{align}
	
We now explore three possible cases involving $\tau_1$: \vspace{1.2mm}
	
\noindent{\bf Case-I.} Let $\tau_1=1$. Then, from \eqref{eq-3.12} we see that 
\begin{align*}
	|H_{2,1}(F_{f^{-1}}/2)|=\frac{3}{64}\approx 0.046875.
\end{align*}
\noindent{\bf Case-II.} Let $\tau_1=0$. Then, from \eqref{eq-3.12} we get 
\begin{align*}  
	|H_{2,1}(F_{f^{-1}}/2)|=\bigg|\frac{1}{192}\left(-12\tau_2^2\right)\bigg|\leq\frac{1}{16}\approx 0.0625.
\end{align*}
\noindent{\bf Case-III.} Let $\tau_1\in (0, 1)$. Applying triangle inequality in \eqref{eq-3.12} and using the fact that $|\tau_3|\leq 1$, we obtain
\begin{align}\label{eq-3.13}
	|H_{2,1}(F_f/2)|&\nonumber\leq\frac{1}{192}\bigg(\bigg|9\tau_1^4 -20\tau_1^2\tau_2\left(1-\tau_1^2\right)-4\tau_2^2 (3+\tau_1^2)(1-\tau_1^2)\bigg|\\&\nonumber\quad+16\tau_1(1-\tau_1^2)(1-|\tau_2|^2)\bigg)\\&=\frac{1}{12}\tau_1(1-\tau_1^2)\left(\vline\; A+B\tau_2+C\tau_2^2\;\vline +1-|\tau_2|^2\right)\nonumber\\&: =\frac{1}{12}\tau_1(1-\tau_1^2)Y(A, B, C),
\end{align}
where
\begin{align*}
	A=\frac{9\tau^3_{1}}{16(1-\tau^2_{1})},\;\; B=-\frac{5\tau_{1}}{4}\;\;\mbox{and}\;\; C=-\frac{(3+\tau^2_{1})}{4\tau_{1}}.
\end{align*}
Observe that $AC< 0$. Hence, we can apply case (ii) of Lemma \ref{lem-2.2}. Next, we check all the conditions of case (ii). \vspace{2mm}
	
\noindent{\bf 3(a)} We note the inequality
\begin{align*}
	-4AC\left(\frac{1}{C^2} -1\right)- B^2=\frac{9\tau^2_{1}(3+ \tau^2_{1})}{16(1- \tau^2_{1})}\left(\frac{16\tau^2_{1}} {(3+\tau^2_{1})^2} - 1\right) -\frac{25\tau^2_{1}}{16}\leq 0
\end{align*}
is equivalent to
\begin{align*}
	-\frac{(39+ 4\tau^2_{1})\tau^2_{1}}{4(3+\tau^2_{1})} \leq 0
\end{align*}
which is evidently  holds for $\tau_{1}\in(0,1)$. However, the inequality $|B|<2(1-|C|)$ is equivalent to $7\tau^2_{1} -8\tau_{1} +6<0$, which is false for $\tau_{1}\in(0,1)$.
	
\noindent{\bf 3(b)} Since
\begin{align*}
	4(1+|C|)^2 =\frac{(3+4\tau_{1}+\tau^2_{1})^2}{4\tau^2_{1}}>0
\end{align*}
and 
\begin{align*}
	-4AC\left(\frac{1}{C^2} -1\right)=-\frac{9\tau^2_{1}(9-10\tau^2_{1} +\tau^4_{1})}{16(1-\tau^2_{1})(3+\tau^2_{1})}<0,
\end{align*}
a simple computation shows that the inequality
\begin{align*}
	\frac{25\tau^2_{1}}{16}=B^2<\min\left\{4(1+|C|)^2,-4AC\left(\frac{1}{C^2} -1\right)\right\}=-\frac{9\tau^2_{1}(9-10\tau^2_{1} +\tau^4_{1})}{16(1-\tau^2_{1})(3+\tau^2_{1})}<0
\end{align*}
is false for $\tau_{1}\in(0,1)$.\vspace{1.2mm}
	
\noindent{\bf 3(c)} Next note that the inequality
\begin{align*}
	|C|(|B| +4|A|) -|AB|=\frac{(3+\tau^2_{1})}{4\tau_{1}}\left(\frac{5\tau_{1}}{4} +\frac{9\tau^3_{1}}{4(1-\tau^2_{1})}\right)-\frac{45\tau^4_{1}}{64(1-\tau^2_{1})}\leq 0
\end{align*}
is equivalent to $60+68\tau^2_{1} -29\tau^4_{1}\leq 0$, which is false for $\tau_{1}\in(0,1)$.
	
\noindent{\bf 3(d)} Note that the inequality
\begin{align*}
	|AB|-|C|(|B|-4|A|)= \frac{45\tau^4_{1}}{64(1-\tau^2_{1})}- \frac{(3+\tau^2_{1})}{4\tau_{1}}\left(\frac{5\tau_{1}}{4} -\frac{9\tau^3_{1}}{4(1-\tau^2_{1})}\right)\leq 0
\end{align*}
is equivalent to $101\tau^4_{1} +148\tau^2_{1} -60\leq 0$ which is true for 
\begin{align*}
	0<\tau_{1}\leq \tau^{''}_{1}=\sqrt{\frac{4\sqrt{721}}{101}- \frac{74}{101}}\approx 0.575109.
\end{align*} 
Applying Lemma \ref{lem-2.2} for $0<\tau_{1}\leq \tau^{''}_{1}$, we obtain
\begin{align}\label{eq-3.14}
	\nonumber|H_{2,1}(F_{f^{-1}}/2)|&\leq \frac{1}{12}\tau_{1} (1-\tau^2_{1}) (-|A|+|B|+|C|) \\&=\frac{1}{192}(12+12\tau^2_{1} -33\tau^4_{1})\nonumber\\&=\frac{1}{192} \Psi(\tau_{1}),
\end{align}
where 
\begin{align*}
	\Psi(\tau_{1}):=12+12\tau_{1}^2 -33\tau_{1}^4,\;\;\;\;\;0< \tau_{1}\leq\tau^{''}_{1}.
\end{align*}
Since $\Psi^{\prime}(\tau_{1})=0$ for $\tau_{1}\in(0,\tau^{''}_{1}]$ holds only for $t_0=\sqrt{{2}/{11}}<\tau^{''}_{1}$. By a simple calculation, the maximum of the function $\Psi(\tau_{1})$ for $0< \tau_{1}\leq\tau^{''}_{1}$ occurs at the point $t_0= \sqrt{{2}/{11}}$. Therefore, in  $0<\tau_{1}\leq \tau^{''}_{1}$, we obtain
\begin{align*}
	\Psi(\tau_{1})\leq\Psi(t_0)=\frac{144}{11}.
\end{align*}
Hence, in view of \eqref{eq-3.14}, an easy computation shows that 
\begin{align*}
	|H_{2,1}(F_{f^{-1}}/2)|\leq\frac{1}{192}\Psi(\tau_{1})\leq\frac{1}{192}\Psi(t_0)=\frac{3}{44}\approx 0.068182.
\end{align*}
\noindent{\bf 3(e)} Applying Lemma \ref{lem-2.2} for $\tau^{''}_{1}<\tau_{1}<1$, we obtain
\begin{align}\label{eq-3.15}
	\nonumber|H_{2,1}(F_{f^{-1}}/2)|&\leq\frac{1}{12}\tau_{1}(1-\tau^2_{1}) (|C| +|A|)\sqrt{1-\dfrac{B^2}{4AC}} \\&=\frac{(12-8\tau^2_{1} +5\tau^4_{1})}{192}\sqrt{\frac{52-16\tau^2_{1}}{9(3+\tau^2_{1})}}\nonumber\\&=\frac{1}{192}\Phi(\tau_{1}),
\end{align}
where
\begin{align*}
	\Phi(t):=(12-8t^2 +5t^4)\sqrt{\frac{52-16t^2}{9(3+t^2)}}, \;\;\;\; \tau^{''}_{1}< t < 1.
\end{align*}
A simple computation shows that
\begin{align*}
	\Phi^{\prime}(t)=-\frac{2t(924 -964t^2 + 41t^4 +80t^6)\sqrt{3+t^2}} {3(3+ t^2)^2 \sqrt{13-4t^2}}< 0, \;\;\;\; \tau^{''}_{1}< t < 1,
\end{align*}
hence, the function $\Phi$ is decreasing on $(\tau^{''}, 1)$. Therefore, we have $\Phi(t)\leq\Phi(\tau^{''}_{1})$ for $\tau^{''}_{1}< t < 1$. Thus it follows from \eqref{eq-3.8} that
\begin{align*}
	|H_{2,1}(F_{f^{-1}}/2)|\leq\frac{1}{192}\Phi(\tau_{1})\leq\frac{1}{192}\Phi(\tau^{''}_{1})=\frac{254\sqrt{721}-5507}{20402}\approx 0.0643695.
\end{align*}
Summarizing Cases $1$, $2$, and $3$, the desired inequality is established.\vspace{2mm}
	
The proof can be concluded by establishing the sharpness of the bound. In order to show that the equality holds when
\begin{align*}
	\tau_{1} =\sqrt{\frac{2}{11}},\;\;\;\;\tau_{3}=1
\end{align*} 
and
\begin{align}\label{eq-3.16}
	|A+B\tau_{2}+C\tau^2_{2}| +1 -|\tau_{2}|^2=-|A|+|B|+|C|,
\end{align}
where
\begin{align*}
	A=\tfrac{1}{8}\sqrt{\dfrac{2}{11}},\;\;\;\;B=-\dfrac{5}{4}\sqrt{\dfrac{2}{11}}\;\;\;\;\mbox{and}\;\;\;\;C=-\dfrac{35}{8}\sqrt{\dfrac{2}{11}}.
\end{align*}
Indeed, we can easily verify that one of the solutions of \eqref{eq-3.16} is $\tau_{2}=1$. It follows that
\begin{align*}
	c_1=2\sqrt{\dfrac{2}{11}},\;\;\;\; c_2=2\;\;\;\; \mbox{and} \;\;\;\; c_3=2\sqrt{\dfrac{2}{11}}.
\end{align*}
In view of \eqref{eq-3.5} and \eqref{eq-3.10}, a simple computations shows that
\begin{align*}
	|\Gamma_{1}\Gamma_{3} -\Gamma^2_{2}|=\frac{3}{44}.
\end{align*}
Hence, the extremal function $f\in\mathcal{S}^{*}_{ch}$ is obtained from \eqref{eq-3.6} and \eqref{eq-3.7} when
\begin{align*}
	p(z)=\frac{1+2\sqrt{\tfrac{2}{11}}z+z^2}{1-z^2}=1+2\sqrt{\tfrac{2}{11}}z+2z^2+2\sqrt{\tfrac{2}{11}}z^3+2z^4+\cdots.
\end{align*}
This shows that the bound in the result is sharp. This completes the proof.
\end{proof}

\begin{remark}
We obtain the sharp bounds of $H_{2,1}(F_{f}/2)$ and $H_{2,1}(F_{f^{-1}}/2)$ for the class $\mathcal{S}^*_{ch}$ are $1/16$ and $3/44$ repectively. It is observed that the sharp bound of $H_{2,1}(F_{f}/2)$ differs from that of $H_{2,1}(F_{f^{-1}}/2)$ for the class $\mathcal{S}^*_{ch}$. This ensures that the results do not exhibit the invariance properties associated with the Hankel determinants of logarithmic coefficients for $f$ and $f^{-1}$  in the class $\mathcal{S}^*_{ch}$.
\end{remark}

\section{\bf Moduli differences of logarithmic coefficients and inverse counterpart for functions in the class $\mathcal{S}^*_{ch}$} In $1985$, de Branges \cite{Branges-AM-1985} solved the famous Bieberbach conjecture by showing that for any function $ f\in\mathcal{S} $ of the form \eqref{eq-1.1}, the inequality $ |a_n|\leq n $ holds for all $ n\geq 2 $, with equality attained by the Koebe function $ k(z):=z/(1-z)^2 $ or its rotations. This naturally led to the question of whether the inequality  $ ||a_{n+1}|-|a_n||\leq 1 $ holds for $ f\in\mathcal{S} $ when $ n\geq 2 $. This problem was first studied by Goluzin in \cite{Goluzin-1946} initially investigated this problem in an attempt to solve the Bieberbach conjecture. Later, in $1963$, Hayman \cite{Hayman-1963} established that for all $ f\in\mathcal{S} $, there exists an absolute constant $ A\geq 1 $ such that $ ||a_{n+1}|-|a_n||\leq A $. The current best known estimate as of now is $ A=3.61 $, due to Grinspan \cite{Grinspan-1976}. On the other hand, for the class $ \mathcal{S} $, the sharp bound is known only for $ n=2 $ (see \cite[Theorem 3.11]{Duren-1983-NY}), namely
\begin{align*}
	-1\leq |a_3|-|a_2|\leq 1.029...
\end{align*}
Similarly, for functions $ f\in\mathcal{S}^* $, Pommerenke \cite{Ch. Pommerenke-1971} has conjectured that $ ||a_{n+1}|-|a_n||\leq 1 $ which was subsequently proven by Leung \cite{Leung-BLMS-1978} in $ 1978 $. For convex functions, Li and Sugawa \cite{Li-Sugawa-CMFT-2017} investigated the sharp bound of $ |a_{n+1}|-|a_n| $ for $ n\geq 2 $, and establish the sharp bounds for $ n=2, 3 $. \vspace{2mm}

The inverse functions are studied by several authors in different perspective (see, e.g., \cite{Sim-Thomas-BAMS-2022, Sim-Thomas-S-2020}). Recently, Sim and Thomas  \cite{Sim-Thomas-BAMS-2022} obtained sharp upper and lower bounds on the difference of the moduli of successive inverse coefficients
for the subclasses of univalent functions. Inspired by the prior research, including the recent article \cite{Allu-Shaji-JMAA-2025}, this paper focuses on determining sharp lower and upper bounds of $ |\gamma_2|-|\gamma_1| $ and $ |\Gamma_2|-|\Gamma_1| $ for functions in the class $\mathcal{S}^*_{ch}$. Our approach involves proving Theorem \ref{Th-4.1} and Theorem \ref{Th-4.2} with the aid of Lemma \ref{lem-4.1}, which plays a crucial role. We state Lemma \ref{lem-4.1} as follows.

\begin{lemma}\cite{Sim-Thomas-S-2020}\label{lem-4.1}
Let $B_1$, $B_2$ and $B_3$ be numbers such that $B_1>0$, $B_2\in\mathbb{C}$ and $B_3\in\mathbb{R}$. Let $p\in\mathcal{P}$ of the form \eqref{eq-2.4}. Define $\Psi_{+}(c_1,c_2)$ and $\Psi_{-}(c_1,c_2)$ by
\begin{align*}
	\Psi_{+}(c_1,c_2)=|B_2 c^2_1 +B_3 c_2| -|B_1 c_1|,
\end{align*}
and
\begin{align*}
	\Psi_{-}(c_1,c_2)=-\Psi_{+}(c_1,c_2).
\end{align*}
Then
\begin{align}\label{eq-4.1}
	\Psi_{+}(c_1,c_2)\leq
	\begin{cases}
		|4B_2 +2B_3|-2B_1, \;\;\;\;\mbox{if}\;\;|2B_2 +B_3|\geq |B_3|+ B_1,\vspace{2mm} \\ 2|B_3|,\hspace{2.8cm}\;\mbox{otherwise},
	\end{cases}
\end{align}
and
\begin{align}\label{eq-4.2}
	\Psi_{-}(c_1,c_2)\leq
	\begin{cases}
		2B_1 -B_4, \hspace{2.8cm}\mbox{if}\;\; B_1\geq B_4 +2|B_3|, \vspace{2mm} \\ 2B_1 \sqrt{\dfrac{2|B_3|}{B_4+2|B_3|}}, \hspace{1.3cm}\;\mbox{if}\;\;B^2_1\leq 2|B_3|(B_4 +2|B_3|), \vspace{2mm} \\ 2|B_3| +\dfrac{B^2_1}{B_4+2|B_3|}, \hspace{1cm}\;\mbox{otherwise},
	\end{cases}
\end{align}
where $B_4=|4B_2 +2B_3|$. All inequalities in \eqref{eq-4.1} and \eqref{eq-4.2} are sharp.
\end{lemma}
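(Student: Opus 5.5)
The idea is to reduce both inequalities to one-variable problems using the Libera--Zlotkiewicz parametrization of Lemma~\ref{lem-2.1}. First, both $\Psi_{+}$ and $\Psi_{-}$ are invariant under rotations $p(z)\mapsto p(e^{i\theta}z)$, i.e. $c_n\mapsto e^{in\theta}c_n$: the term $B_2c_1^2+B_3c_2$ is multiplied by $e^{2i\theta}$ and $c_1$ by $e^{i\theta}$, so no modulus changes. Hence we may assume $c_1\geq 0$. Lemma~\ref{lem-2.1} then gives $c_1=2t$ and $c_2=2t^2+2(1-t^2)\zeta$ with $t\in[0,1]$ and $\zeta\in\overline{\mathbb{D}}$. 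Conversely, every such pair $(t,\zeta)$ is realized by some $p\in\mathcal{P}$, which is what we will use for sharpness. Substituting,
\begin{align*}
B_2c_1^2+B_3c_2=(4B_2+2B_3)t^2+2B_3(1-t^2)\zeta .
\end{align*}
Write $4B_2+2B_3=B_4e^{i\alpha}$, so that $B_4=|4B_2+2B_3|$.

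For $\Psi_{+}$, the maximum of the modulus over $|\zeta|\le 1$ equals $B_4t^2+2|B_3|(1-t^2)$; it is attained for $|\zeta|=1$ with the argument of $\zeta$ chosen to align the two terms. Thus
\begin{align*}
\Psi_{+}\le g(t):=(B_4-2|B_3|)t^2-2B_1t+2|B_3|,\qquad t\in[0,1].
\end{align*}
If $B_4\ge 2|B_3|$, then $g$ is convex, so its maximum is $\max\{g(0),g(1)\}=\max\{2|B_3|,\,B_4-2B_1\}$. Moreover, $B_4-2B_1\ge 2|B_3|$ is exactly the condition $|2B_2+B_3|\ge |B_3|+B_1$. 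If $B_4<2|B_3|$, then $g'(t)<0$ on $[0,1]$ because $B_1>0$, so the maximum is $g(0)=2|B_3|$; in this case the first alternative cannot occur. This yields \eqref{eq-4.1}. Equality holds at $t=1$ (the function $(1+z)/(1-z)$) or at $t=0$ with $|\zeta|=1$ (the function $(1+z^2)/(1-z^2)$, up to rotation).

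For $\Psi_{-}$ we need the minimum of the modulus instead. Since $\min_{|\zeta|\le1}|a+b\zeta|=\max\{0,|a|-|b|\}$, we get
\begin{align*}
\Psi_{-}\le h(t):=2B_1t-\max\bigl\{0,\,(B_4+2|B_3|)t^2-2|B_3|\bigr\}.
\end{align*}
Put $t_0=\sqrt{2|B_3|/(B_4+2|B_3|)}\le 1$ and $t^{*}=B_1/(B_4+2|B_3|)$. On $[0,t_0]$ the function $h$ is increasing, with $h(t_0)=2B_1t_0$. On $[t_0,1]$ it is the concave quadratic $2B_1t-(B_4+2|B_3|)t^2+2|B_3|$, whose vertex is at $t^{*}$. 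This gives three cases:
\begin{itemize}
\item if $t^{*}\le t_0$, equivalently $B_1^2\le 2|B_3|(B_4+2|B_3|)$, the maximum is $2B_1t_0$;
\item if $t^{*}\ge1$, equivalently $B_1\ge B_4+2|B_3|$, the maximum is $h(1)=2B_1-B_4$;
\item otherwise the maximum is the vertex value $2|B_3|+B_1^2/(B_4+2|B_3|)$.
\end{itemize}
The two degenerate conditions are compatible with the stated ordering of cases, since $t^{*}\ge 1$ forces $t^{*}>t_0$. This yields \eqref{eq-4.2}. For sharpness, take $t$ equal to the maximizing value and choose $\zeta$ with $2B_3(1-t^2)\zeta=-e^{i\alpha}\min\{B_4t^2,\,2|B_3|(1-t^2)\}$, which requires $|\zeta|\le1$ and so gives an admissible $p$ by Lemma~\ref{lem-2.1}.

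The main obstacle is the $\Psi_{-}$ part. The piecewise definition of $h$ has to be handled carefully: one must check that the three regimes exhaust all parameter values and match the stated conditions, and that the minimizing $\zeta$ really lies in $\overline{\mathbb{D}}$ at $t=t_0$ and at $t=t^{*}$. I would verify these regime boundaries first before writing out the extremal functions explicitly.
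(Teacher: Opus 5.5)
Your proof is correct and complete. The paper gives no proof of this lemma; it is quoted from Sim and Thomas. Rotating to $c_1\ge 0$, writing $c_2=2t^2+2(1-t^2)\zeta$, and maximizing the one-variable functions $g$ and $h$ is the natural route, and the case boundaries you derive ($B_4\ge 2|B_3|$ versus $B_4<2|B_3|$, then $t^{*}\le t_0$ and $t^{*}\ge 1$) match the stated conditions exactly.

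Only two trivial edge cases need a word:
\begin{itemize}
\item When $B_2=B_3=0$, the quantities $t_0$ and $t^{*}$ are undefined, but $\Psi_{-}=2B_1t\le 2B_1$ holds directly.
\item The condition $t^{*}\ge 1$ only forces $t^{*}\ge t_0$, not $t^{*}>t_0$. Equality occurs when $B_4=0$ and $B_1=2|B_3|$, but then the first two bounds for $\Psi_{-}$ both equal $2B_1$, so the argument still goes through.
\end{itemize}
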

We have established the following result on the sharp inequality for the moduli differences of logarithmic coefficients in the class $\mathcal{S}^*_{ch}$.
\begin{theorem}\label{Th-4.1}
Let $f\in\mathcal{S}^*_{ch}$ and has the series representation $f(z)=z+a_2z^2+a_3z^3+\cdots$, and $ \gamma_{1}, \gamma_{2} $ are given by \eqref{eq-2.2}. Then we have 
\begin{align*}
	-\frac{1}{\sqrt{6}}\leq|\gamma_2|-|\gamma_1|\leq \frac{1}{4}.
\end{align*}
Both inequalities are sharp.
\end{theorem}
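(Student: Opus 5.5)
Following the pattern of Theorems \ref{Th-22.11} and \ref{Th-3.1}, I would express $|\gamma_2|-|\gamma_1|$ as a functional of the coefficients $c_1,c_2$ of the Carath\'eodory function $p=(1+\omega)/(1-\omega)$. Then I would read off both bounds from Lemma \ref{lem-4.1}.

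\textbf{Reduction to Lemma \ref{lem-4.1}.} From \eqref{eq-2.2} and \eqref{eq-2.12} we already have $\gamma_1=\tfrac14 c_1$ and $\gamma_2=\tfrac18\left(c_2-\tfrac14 c_1^2\right)$. Hence
\begin{align*}
|\gamma_2|-|\gamma_1| = \left|B_2c_1^2+B_3c_2\right|-B_1|c_1| = \Psi_{+}(c_1,c_2),
\end{align*}
with $B_1=\tfrac14>0$, $B_2=-\tfrac1{32}$ and $B_3=\tfrac18\in\mathbb{R}$. So the hypotheses of Lemma \ref{lem-4.1} hold, and $B_4=|4B_2+2B_3|=\tfrac18$.

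\textbf{Upper bound.} We have $|2B_2+B_3|=\tfrac1{16}<\tfrac38=|B_3|+B_1$. So the ``otherwise'' branch of \eqref{eq-4.1} applies and gives $\Psi_{+}\le 2|B_3|=\tfrac14$. For sharpness take $c_1=0$, $c_2=2$, i.e.\ $p(z)=(1+z^2)/(1-z^2)$, which corresponds to $f_2$ in \eqref{SF-2.9}. Then $\gamma_1=0$ and $|\gamma_2|=\tfrac14$.

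\textbf{Lower bound.} Here I use \eqref{eq-4.2}. The first condition $B_1\ge B_4+2|B_3|$ reads $\tfrac14\ge\tfrac38$, which is false. The second condition $B_1^2=\tfrac1{16}\le 2|B_3|(B_4+2|B_3|)=\tfrac3{32}$ holds. Therefore
\begin{align*}
\Psi_{-}\le 2B_1\sqrt{\frac{2|B_3|}{B_4+2|B_3|}}=\frac12\sqrt{\frac23}=\frac1{\sqrt6},
\end{align*}
that is, $|\gamma_2|-|\gamma_1|\ge -1/\sqrt6$.

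\textbf{Sharpness of the lower bound.} This is the only step needing care, because the lemma asserts sharpness but gives no extremal function. The extremal situation should make $\gamma_2=0$ while $|c_1|$ is as large as possible. Using Lemma \ref{lem-2.1} with $c_1=2\tau_1$, we get $c_2-\tfrac14c_1^2=\tau_1^2+2(1-\tau_1^2)\tau_2$. This can vanish with $|\tau_2|\le1$ exactly when $\tau_1^2\le\tfrac23$. In that case $|\gamma_2|-|\gamma_1|=-\tau_1/2$, which is minimal at $\tau_1=\sqrt{2/3}$ with $\tau_2=-1$. Lemma \ref{lem-2.1} then gives
\begin{align*}
p(z)=\frac{1-z^2}{1-2\sqrt{2/3}\,z+z^2}.
\end{align*}
This $p$ has $c_1=2\sqrt{2/3}$ and $c_2=\tfrac23=\tfrac14c_1^2$. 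The corresponding $f\in\mathcal{S}^*_{ch}$ is obtained from $\omega=(p-1)/(p+1)$ via \eqref{eq-2.8}. It satisfies $\gamma_2=0$ and $|\gamma_1|=1/\sqrt6$, so the lower bound is attained.
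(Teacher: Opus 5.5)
Your proposal is correct and matches the paper's proof essentially step by step: the same reduction $|\gamma_2|-|\gamma_1|=\Psi_{+}(c_1,c_2)$ with $B_1=\tfrac14$, $B_2=-\tfrac1{32}$, $B_3=\tfrac18$, the same branches of Lemma \ref{lem-4.1}, and the same extremal functions ($f_2$, and $p(z)=(1-z^2)/(1-2\sqrt{2/3}\,z+z^2)$). Your derivation of the second extremal function via Lemma \ref{lem-2.1}, choosing $\tau_1=\sqrt{2/3}$ and $\tau_2=-1$ so that $\gamma_2=0$, is a useful addition, since the paper states this $p$ without checking that $c_2=\tfrac14 c_1^2$.
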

\begin{proof}[\bf Proof of Theorem \ref{Th-4.1}]
In view of \eqref{eq-2.2} and \eqref{eq-2.12}, we see that
\begin{align}\label{eq-4.3}
	\nonumber|\gamma_2|-|\gamma_1|&=\;\vline\; \frac{1}{2} \left(a_{3} -\frac{1}{2}a^2_{2}\right)\;\vline - \;\vline\; \frac{1}{2} a_{2}\;\vline\\&\nonumber=\;\vline\; -\frac{1}{32}c^2_1 +\frac{1}{8}c_2\;\vline - \;\vline\; \frac{1}{4} c_1\;\vline\\& :=\Psi_{+}(c_1,c_2),
\end{align}
where
\begin{align*}
	B_1=\frac{1}{4},\;\; B_2=-\frac{1}{32} \;\;\mbox{and}\;\; B_3= \frac{1}{8}.
\end{align*}
\noindent{\bf Estimate of the upper bound:} For the upper bound, we see that $|2B_2 +B_3|=\frac{1}{16}$ and $|B_3|+ B_1=\frac{3}{8}$. It follows that $|2B_2 +B_3|\not\geq |B_3|+ B_1$. Hence, applying Lemma \ref{lem-4.1}, we obtain
\begin{align*}
	\Psi_{+}(c_1,c_2)\leq 2|B_3|=\frac{1}{4}.
\end{align*}
As a result, applying \eqref{eq-4.3}, we obtain the upper bound
\begin{align}\label{eq-4.4}
	|\gamma_2|-|\gamma_1|\leq \frac{1}{4}.
\end{align}
The function $f_2$, which is defined in \eqref{SF-2.9} gives the sharpness of the inequality \eqref{eq-4.4}.\vspace{2mm}
	
\noindent{\bf Estimate of the lower bound:} For the lower bound, we see that $B_4=|4B_2 +2B_3|=\frac{1}{8}$, $B_4 +2|B_3|= \frac{3}{8}$. It follows that $B_1\not\geq B_4 +2|B_3| $. Also, the equality $2|B_3|(B_4 + 2|B_3|) = \frac{3}{32}$ ensures that the condition $B_1^2 \leq 2|B_3|(B_4 + 2|B_3|)$ holds. Thus, by Lemma \ref{lem-4.1}, we have
\begin{align*}
	\Psi_{-}(c_1,c_2)\leq 2B_1 \sqrt{\dfrac{2|B_3|}{B_4+2|B_3|}} =\frac{1}{\sqrt{6}}.
\end{align*}
Clearly, we observe that
\begin{align*}
	\Psi_{+}(c_1,c_2)=-\Psi_{-}(c_1,c_2)\geq -\frac{1}{\sqrt{6}}.
\end{align*}
Hence, from \eqref{eq-4.3}, we conclude
\begin{align}\label{eq-4.5}
	|\gamma_2|-|\gamma_1|\geq -\frac{1}{\sqrt{6}}.
\end{align}
The inequality \eqref{eq-4.5} is sharp for the function $f\in\mathcal{A}$ given by \eqref{eq-2.8} with
\begin{align*}
	p(z)=\frac{1-z^2}{1-2\sqrt{\frac{2}{3}}z+z^2},
\end{align*}
which completes the proof.
\end{proof}
We have established the following result on the sharp inequality for the moduli differences of logarithmic inverse coefficients in the class $\mathcal{S}^*_{ch}$.
\begin{theorem}\label{Th-4.2}
Let $f\in\mathcal{S}^*_{ch}$ and has the series representation $f(z)=z+a_2z^2+a_3z^3+\cdots$, and $ \Gamma_{1}, \Gamma_{2} $ are given by \eqref{eq-3.4}. Then we have 
\begin{align*}
	-\frac{1}{\sqrt{10}}\leq|\Gamma_2|-|\Gamma_1|\leq \frac{1}{4}.
\end{align*}
Both inequalities are sharp.
\end{theorem}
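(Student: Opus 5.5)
The plan is to reduce the statement to Lemma \ref{lem-4.1}, exactly as in the proof of Theorem \ref{Th-4.1}. By \eqref{eq-3.4} and \eqref{eq-2.12} we have $\Gamma_1=-\tfrac12 a_2=-\tfrac14 c_1$. Since $a_3-\tfrac32 a_2^2=\tfrac14 c_2-\tfrac{5}{16}c_1^2$, also $\Gamma_2=-\tfrac18\left(c_2-\tfrac54 c_1^2\right)$. Hence
\begin{align*}
|\Gamma_2|-|\Gamma_1|=\left|-\frac{5}{32}c_1^2+\frac18 c_2\right|-\left|\frac14 c_1\right|=\Psi_{+}(c_1,c_2),
\end{align*}
with $B_1=\tfrac14$, $B_2=-\tfrac{5}{32}$ and $B_3=\tfrac18$. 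These satisfy $B_1>0$, $B_2\in\mathbb{C}$ and $B_3\in\mathbb{R}$, so Lemma \ref{lem-4.1} applies.

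For the upper bound we compute $|2B_2+B_3|=\tfrac{3}{16}$ and $|B_3|+B_1=\tfrac38$. The first alternative of \eqref{eq-4.1} therefore fails, and we get $\Psi_+\le 2|B_3|=\tfrac14$. Sharpness comes from $f_2$ in \eqref{SF-2.9}, which corresponds to $c_1=0$, $c_2=2$. For it $\Gamma_1=0$ and $|\Gamma_2|=\tfrac14$.

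For the lower bound we use \eqref{eq-4.2}. Here $B_4=|4B_2+2B_3|=\tfrac38$ and $B_4+2|B_3|=\tfrac58>B_1$, so the first case of \eqref{eq-4.2} is excluded. Also $B_1^2=\tfrac1{16}\le 2|B_3|(B_4+2|B_3|)=\tfrac{5}{32}$, so the second case applies. It gives
\begin{align*}
\Psi_-\le 2B_1\sqrt{\frac{2|B_3|}{B_4+2|B_3|}}=\frac12\sqrt{\frac25}=\frac{1}{\sqrt{10}},
\end{align*}
and hence $|\Gamma_2|-|\Gamma_1|\ge -1/\sqrt{10}$.

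The main point needing care is the extremal function for the lower bound, because the analogue of the one used in Theorem \ref{Th-4.1} does not work here. I would restrict to real $c_1=2t$ and $c_2=2$, which is admissible by \eqref{eq-2.6} with $\tau_2=1$. This gives $\Psi_-=\tfrac t2-\tfrac18|2-5t^2|$. Choosing $t^2=\tfrac25$ kills the second term and yields exactly $1/\sqrt{10}$. Thus the extremal function is given by \eqref{eq-2.8} with
\begin{align*}
p(z)=\frac{1+2\sqrt{\tfrac25}\,z+z^2}{1-z^2}=1+2\sqrt{\tfrac25}\,z+2z^2+\cdots,
\end{align*}
the same shape as the extremal $p$ in Theorem \ref{Th-3.2}. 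A direct check via \eqref{eq-2.12} and \eqref{eq-3.4} confirms that it gives $|\Gamma_1|=\tfrac{1}{\sqrt{10}}$ and $\Gamma_2=0$, so equality holds.
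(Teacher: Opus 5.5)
Your proposal is correct and is essentially the paper's proof. You use the same reduction to Lemma \ref{lem-4.1}; your $B_2=-\tfrac{5}{32}$, $B_3=\tfrac18$ differ from the paper's only by a harmless overall sign. You obtain the bound $2|B_3|=\tfrac14$, sharp for $f_2$, and the second case of \eqref{eq-4.2} giving $1/\sqrt{10}$, with the same extremal $p(z)=\frac{1+2\sqrt{2/5}\,z+z^2}{1-z^2}$. Your upper-bound check $|2B_2+B_3|=\tfrac{3}{16}<\tfrac38=|B_3|+B_1$ is the condition \eqref{eq-4.1} actually requires, whereas the paper's write-up at that step mistakenly checks the lower-bound conditions.
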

\begin{proof}[\bf Proof of Theorem \ref{Th-4.2}]
In view of \eqref{eq-2.12} and \eqref{eq-3.4}, we see that
\begin{align}\label{eq-4.6}
	\nonumber|\Gamma_2|-|\Gamma_1|&=\;\vline\; -\dfrac{1}{2} \left(a_3 -\dfrac{3}{2}a^2_{2}\right)\;\vline - \;\vline\; -\frac{1}{2} a_{2}\;\vline\\&\nonumber=\;\vline\; \frac{5}{32}c^2_1 -\frac{1}{8}c_2\;\vline - \;\vline\; \frac{1}{4} c_1\;\vline\\&=\Psi_{+}(c_1,c_2),
\end{align}
where
\begin{align*}
	B_1=\frac{1}{4},\;\; B_2=\frac{5}{32} \;\;\mbox{and}\;\; B_3=- \frac{1}{8}.
\end{align*}
\noindent{\bf Estimate of the upper bound:} With regard to the lower bound, we calculate $B_4 = |4B_1 + 2B_3| = \frac{3}{8}$ and $B_4 + 2|B_3| = \frac{5}{8}$, from which it is evident that the condition $B_1 \geq B_4 + 2|B_3|$ is not met. Moreover, the relation $2|B_3|(B_4 + 2|B_3|) = \frac{5}{32}$ confirms that the inequality $B_1^2 \leq 2|B_3|(B_4 + 2|B_3|)$ is satisfied. Consequently, by virtue of Lemma \ref{lem-4.1}, we obtain
\begin{align*}
	\Psi_{+}(c_1,c_2)\leq 2|B_3|=\frac{1}{4}.
\end{align*}
Thus, it follows from \eqref{eq-4.6} that
\begin{align}\label{eq-4.7}
	|\Gamma_2|-|\Gamma_1|\leq \frac{1}{4}.
\end{align}
The function $f_2$, which is defined in \eqref{eq-2.9} gives the sharpness of the inequality \eqref{eq-4.7}.\vspace{2mm}
	
\noindent{\bf Estimate of the lower bound:} Regarding the lower bound, we observe that $B_4 = |4B_1 + 2B_3| = \frac{3}{8}$ and $B_4 + 2|B_3| = \frac{5}{8}$. It follows that the condition $B_1 \geq B_4 + 2|B_3|$ is not satisfied. Furthermore, the equality $2|B_3|(B_4 + 2|B_3|) = \frac{5}{32}$ ensures that the inequality $B_1^2 \leq 2|B_3|(B_4 + 2|B_3|)$ holds. Consequently, by applying Lemma \ref{lem-4.1}, we obtain
\begin{align*}
	\Psi_{-}(c_1,c_2)\leq 2B_1 \sqrt{\dfrac{2|B_3|}{B_4+2|B_3|}} =\frac{1}{\sqrt{10}}.
\end{align*}
A simple computation leads to 
\begin{align*}
	\Psi_{+}(c_1,c_2)=-\Psi_{-}(c_1,c_2)\geq -\frac{1}{\sqrt{10}}.
\end{align*}
Consequently, from \eqref{eq-4.6}, we obtain
\begin{align}\label{eq-4.8}
	|\Gamma_2|-|\Gamma_1|\geq -\frac{1}{\sqrt{10}}.
\end{align}
The inequality \eqref{eq-4.8} is sharp for the function $f\in\mathcal{A}$ given by \eqref{eq-2.8} with
\begin{align*}
	p(z)=\frac{1+2\sqrt{\frac{2}{5}}z+z^2}{1-z^2},
\end{align*}
which completes the proof.
\end{proof}
\section{\bf Concluding remarks}
In this article, we have conducted a comprehensive study of the logarithmic and inverse logarithmic coefficient functionals for the class $\mathcal{S}_{ch}^*$, defined by subordination to the hyperbolic cosine function. By establishing Theorems 2.1, 2.2, 3.1, 3.2, and 4.1, we have successfully characterized the extremal properties of this class.\vspace{1.2mm}

Several key conclusions can be drawn from our findings:\vspace{1.2mm}

\noindent{\bf 1. Sharpness and extremal functions:} We have demonstrated that the sharp upper bounds for the logarithmic coefficients $|\gamma_n|$ ($n=1, 2, 3$) and the second Hankel determinant $|H_{2,1}(F_f/2)|$ are precisely achieved by the rotation of the function $\varphi_0(z) = z + \cosh(z)$. This confirms that the hyperbolic cosine subordination creates a unique geometric structure distinct from the classical starlike classes.\vspace{1.2mm}

\noindent{\bf 2. Inverse class symmetry:} Our results for the inverse logarithmic coefficients $\Gamma_n$ reveal a specific symmetry in the growth rates between the function and its inverse. The established bound for the second Hankel determinant of the inverse class, $|H_{2,1}(F_{f^{-1}}/2)|$, provides a new benchmark for this subclass.\vspace{1.2mm}

\noindent{\bf 3. Coefficient differences:} The analysis of moduli differences in Section 4 addresses a more subtle structural problem in coefficient theory. Our findings suggest that for $\mathcal{S}_{ch}^*$, the growth rate of consecutive coefficients is strictly controlled by the subordination factor.\vspace{1.2mm}

While the results presented in Sections 2–4 provide a complete characterization of the initial logarithmic functionals for the class $\mathcal{S}_{ch}^*$, several higher-order problems remain unresolved. We propose the following open problems for future investigation.\vspace{1.2mm}

In Theorem 2.1, we established that for $n \in \{1, 2, 3\}$, the sharp bound is $|\gamma_n| \le \frac{1}{2n}$. This leads to the following conjecture:
\begin{conj}
	For any $f \in \mathcal{S}_{ch}^*$, the logarithmic coefficients $\gamma_n$ satisfy the sharp inequality$$|\gamma_n| \le \frac{1}{2n} \quad \text{for all } n \in \mathbb{N},$$ with equality achieved by the function 
	\begin{align*}
		f(z) = z \exp \left( \int_0^z \frac{\cosh(t^n)-1}{t} dt \right).
	\end{align*}
\end{conj}
This paper focuses on the second Hankel determinant $H_{2,1}(F_f/2)$. The behavior of higher-order determinants for this class is currently unknown.
\begin{problem}
	Determine the sharp upper bound for the third-order Hankel determinant of logarithmic coefficients, defined as:$$H_{3,1}(F_f/2) = \begin{vmatrix} \gamma_1 & \gamma_2 & \gamma_3 \\ \gamma_2 & \gamma_3 & \gamma_4 \\ \gamma_3 & \gamma_4 & \gamma_5 \end{vmatrix}$$for functions belonging to $\mathcal{S}_{ch}^*$.
\end{problem}
The classical Zalcman conjecture relates the coefficients $a_n$ of univalent functions. A logarithmic version of this problem involves the functional $|\gamma_{n+m} - \gamma_n \gamma_m|$.
\begin{problem}
	For the class $\mathcal{S}_{ch}^*$, find the sharp bound for the generalized Zalcman-type functional $|\gamma_3 - \mu \gamma_1 \gamma_2|$ or $|\gamma_4 - \gamma_2^2|$ for real or complex $\mu$.
\end{problem}
Let $\mathcal{C}_{ch}$ be the class of functions $f \in \mathcal{A}$ such that $1 + \frac{zf''(z)}{f'(z)} \prec \cosh(z)$.
\begin{problem}
	 Establish the sharp bounds for the logarithmic coefficients and the second Hankel determinant for the convex counterpart $\mathcal{C}_{ch}$. Given the relation between starlike and convex functions, are the bounds for $\mathcal{C}_{ch}$ significantly smaller than those established for $\mathcal{S}_{ch}^*$ in this paper?
\end{problem}
\vspace{5mm}

\noindent{\bf Acknowledgment:}  The authors would be grateful to the anonymous referee(s) for their insightful suggestions and comments, which significantly improved the exposition of this paper. The first author is supported by SERB (File No. SUR/2022/002244), Government of India. The second author acknowledges financial support from CSIR-SRF (File No. 09/0096(12546)/2021-EMR-I, dated: 24/10/2025), Government of India, New Delhi.\\

\noindent{\bf Author Contributions:} All authors have equally contributed for the paper and all the  authors have reviewed the final manuscript.\\

\noindent\textbf{Compliance of Ethical Standards:}\\

\noindent\textbf{Conflict of interest.} The authors declare that there is no conflict  of interest regarding the publication of this paper.\vspace{1.5mm}

\noindent\textbf{Data availability statement.}  Data sharing is not applicable to this article as no datasets were generated or analyzed during the current study.

\end{document}